\documentclass[a4paper,
11pt,
pdftex,
normalheadings,
headsepline,
footsepline,
onecolumn,
headinclude,
footinclude,
DIV14,
abstracton]
{scrartcl}

\usepackage
{
	graphicx,
	amssymb,
	amsmath,
	amsthm,
	xcolor,
	dsfont,
	algpseudocode,
	authblk,
}

\usepackage[ngerman, english]{babel}

\usepackage[bf]{caption}
\captionsetup{format=plain}

\usepackage[colorlinks,
pdffitwindow=false,
plainpages=false,
pdfpagelabels=true,
pdfpagemode=UseOutlines,
pdfpagelayout=SinglePage,
bookmarks=false,
colorlinks=true,
hyperfootnotes=false,
linkcolor=blue,
citecolor=green!50!black]
{hyperref}

\usepackage{enumerate}
\usepackage{algorithm}

\usepackage{tabularx}

\newcommand{\R}{\mathbb{R}}

\newtheorem{theorem}{Theorem}[section]

\newtheorem{lemma}[theorem]{Lemma}

\newtheorem{definition}[theorem]{Definition}
\newtheorem{remark}[theorem]{Remark}
\newtheorem{example}[theorem]{Example}


\let\OLDthebibliography\thebibliography
\renewcommand\thebibliography[1]{
	\OLDthebibliography{#1}
	\setlength{\parskip}{0pt}
	\setlength{\itemsep}{0pt plus 0.3ex}
}

\begin{document}

\title{A Descent Method for Equality and Inequality Constrained Multiobjective Optimization Problems}
\author[1]{Bennet Gebken}
\author[1]{Sebastian Peitz}
\author[1]{Michael Dellnitz}
\affil[1]{\normalsize Department of Mathematics, Paderborn University, Germany}

\maketitle

\begin{abstract}
	In this article we propose a descent method for equality and inequality constrained multiobjective optimization problems (MOPs) which generalizes the steepest descent method for unconstrained MOPs by Fliege and Svaiter to constrained problems by using two active set strategies. Under some regularity assumptions on the problem, we show that accumulation points of our descent method satisfy a necessary condition for local Pareto optimality. Finally, we show the typical behavior of our method in a numerical example.
\end{abstract}

\section{Introduction}
In many problems we face in reality, there are multiple objectives that have to be optimized at the same time. In production for example, one often wants to minimize the cost of a product but also maximize its quality. When the objectives we want to optimize are conflicting (like in the above example), classical scalar-valued optimization methods are not suited. Since there is no point that is optimal for all objectives at the same time, one has to search for \emph{optimal compromises}. This is the motivation for \emph{multiobjective optimization}.

A general \emph{multiobjective optimization problem} (MOP) consists of $m$ objective functions $F_i: \mathcal{N} \rightarrow \R$, where $i \in \{1,...,m\}$ and $\mathcal{N} \subseteq \R^n$. The set of optimal compromises is called the \emph{Pareto set}, containing all \emph{Pareto optimal points}. A point $x \in \mathcal{N}$ is a Pareto optimal point if there exists no feasible point that is at least as good as $x$ in all objective functions and strictly better than $x$ in at least one objective function. The goal in multiobjective optimization is to find the Pareto set of a given MOP.

For unconstrained MOPs (i.e.~$\mathcal{N} = \R^n$), there are multiple ways to compute the Pareto set. A popular approach is to scalarize the MOP -- e.g.~by weighting and summarizing the objective functions (see e.g.~\cite{M1998,E2005}) -- and then solve a sequence of scalar-valued problems. Another widely used approach is based on heuristic optimization and results in evolutionary methods (\cite{D2001,SMDT03,CLV2007}). In the case where the Pareto set is a connected manifold, it can be computed using continuation methods \cite{SDD2005}. Some methods for scalar-valued problems can be generalized to MOPs. Examples are the steepest descent method \cite{FS2000}, the Newton method \cite{FDS2009} and the trust-region method \cite{CLM2016}. Finally, set-oriented methods can be applied to compute a covering of the global Pareto set \cite{DSH2005,SWO+13}.

There also exist gradient-based methods which can handle both unconstrained MOPs as well as certain classes of constraints. If the MOP is constrained to a closed and convex set, it is possible to use a projected gradient method \cite{DI2004}. For more general inequality constraints, it is possible to use the steepest descent method described in \cite[Section 8]{FS2000}. For equality constrained MOPs where the feasible set is a (Riemannian) manifold, it is possible to use the steepest descent method described in \cite{BFO2012}. Until recently, the consideration of MOPs with equality and inequality constraints was limited to heuristic methods \cite{MXXWM2014} and scalarization methods (see e.g.~\cite{E2008}). In 2016, Fliege and Vaz proposed a method to compute the whole Pareto set of equality and inequality constrained MOPs that is based on SQP-techniques \cite{FV2016}. Their method operates on a finite set of points in the search space and has two stages: In the first stage, the set of points is initialized and iteratively enriched by nondominated points and in the second stage, the set of points is driven to the actual Pareto set.

The goal of this article is to extend the steepest descent method in \cite{FS2000} to the constrained case by using two different \emph{active set strategies} and an adjusted step length. In active set strategies, inequality constraints are either treated as equality constraints (if they are close to 0) or neglected. This approach essentially combines the descent method on manifolds \cite{BFO2012} with the descent method for inequality constrained problems described in \cite{FS2000}.

In contrast to \cite{FV2016}, the method we propose computes a single Pareto optimal (or critical) point for each initial guess $x_0$. However, it is possible to interpret this approach as a discrete dynamical system and use set-oriented methods (see e.g.~\cite{DH1997}) to calculate its global attractor which contains the global Pareto set. It is also possible to use evolutionary approaches to globalize our method.

The outline of the article is as follows. In Section~\ref{sec:MO} we give a short introduction to multiobjective optimization and the steepest descent method for unconstrained MOPs. In Section~\ref{sec:Descent} we begin by generalizing this method to equality constraints and then proceed with the equality and inequality constrained case. We prove convergence for both cases. In Section~\ref{sec:Numerical_Results} we apply our method to an example to show its typical behavior and discuss ways to optimize it. In Section~\ref{sec:Conclusion} we draw a conclusion and discuss future work.

\section{Multiobjective optimization}
\label{sec:MO}
The goal of (unconstrained) multiobjective optimization is to minimize an objective function
\begin{equation*}
	F : \R^n \rightarrow \R^m, x = (x_1,...,x_n) \mapsto (F_1(x),...,F_m(x)).
\end{equation*}
Except in the case where all $F_i$ have the same minima, the definition of optimality from scalar-valued optimization does not apply. This is due to the loss of the total order for $m \geq 2$. We thus introduce the notion of \emph{dominance}.

\begin{definition}
	Let $v,w \in \R^m$. 
	\begin{enumerate}
		\item $v \leq w \quad :\Leftrightarrow \quad v_i \leq w_i \quad \forall i \in \{1,...,m\}$. Define $<$, $\geq$ and $>$ analogously.
		\item $v$ \emph{dominates} $w$, if $v \leq w \textrm{ and } v_i < w_i \textrm{ for some } i \in \{1,...,m\}.$
	\end{enumerate}
\end{definition}

The dominance relation defines a partial order on $\R^m$ such that we generally can not expect to find a minimum or infimum of $F(\R^n)$ with respect to that order. Instead, we want to find the set of \emph{optimal compromises}, the so-called \emph{Pareto set}.

\begin{definition} \label{def:dominate}
	\begin{enumerate}
		\item $x \in \R^n$ is \emph{locally Pareto optimal} if there is a neighborhood $U \subseteq \R^n$ of $x$ such that
		\begin{equation} \label{eq:localOptimal}
			\nexists y \in U : F(y) \text{ dominates } F(x).
		\end{equation}
		The set of locally Pareto optimal points is called the \emph{local Pareto set}.
		\item $x \in \R^n$ is \emph{globally Pareto optimal} if \eqref{eq:localOptimal} holds for $U = \R^n$. The set of globally Pareto optimal points is called the \emph{global Pareto set}.
		\item The \emph{local (global) Pareto front} is the image of the local (global) Pareto set under $F$.
	\end{enumerate}
\end{definition}
\noindent Note that the well-known Karush-Kuhn-Tucker (KKT) conditions from scalar-valued optimization also apply in the multiobjective situation \cite{KT51}.

Minimizing $F$ can now be defined as finding the Pareto set of $F$. The minimization of $F$ on a subset $\mathcal{N} \subseteq \R^n$ is defined the same way by replacing $\R^n$ in Definition \ref{def:dominate} with $\mathcal{N}$. For the constrained case, a point $x \in \R^n$ is called \emph{feasible} if $x \in \mathcal{N}$. In this paper we consider the case where $\mathcal{N}$ is given by a number of equality and inequality constraints. Thus, the general MOP we consider is of the form
\begin{equation} \label{MOP} \tag{MOP}
	\begin{aligned}
		& \underset{x \in \R^n}{\text{min}} && F(x), && \\
		& \text{s.t.} &&  H(x)  = 0, && \\
		&    && G(x) \leq 0,  &&
	\end{aligned}
\end{equation}
where $G: \R^n \rightarrow \R^{m_G}$ and $H: \R^n \rightarrow \R^{m_H}$ are differentiable.

For unconstrained MOPs, Fliege and Svaiter have proposed a descent method in \cite{FS2000} which we will now briefly summarize. Starting at a given point $x_0 \in \R^n$, the method generates a sequence $(x_l)_l \in \R^n$ with
\begin{equation*}
	F(x_{l+1}) < F(x_l) \quad \forall l \geq 0.
\end{equation*}
As the first-order necessary condition for local Pareto optimality they use
\begin{equation*}
	\textrm{im}(DF(x)) \cap (\R^{<0})^m = \emptyset ,
\end{equation*}
which is equivalent to
\begin{equation*}
	\nexists v \in \R^n: \nabla F_i(x) v < 0 \quad \forall i \in \{1,...,m\}.
\end{equation*}
Points $x \in \R^n$ satisfying this condition are called \emph{Pareto critical}. If $x$ is not Pareto critical, then $v \in \R^n$ is called a \emph{descent direction in} $x$ if $\nabla F_i(x) v < 0$ for all $i \in \{1,...,m\}$. Such a descent direction can be obtained via the following subproblem:
\begin{equation} \label{SP}
	\begin{aligned}
		& \underset{(v,\beta) \in \R^{n+1}}{\text{min}} &&  \beta + \frac{1}{2} \| v \|^2, && \\
		& \text{s.t.} &&  \nabla F_i(x) v \leq \beta &&\forall i \in \{1,...,m\}.
	\end{aligned} \tag{SP}
\end{equation}
By $\alpha(x) = \arg \min \eqref{SP}$ we denote the optimal value corresponding to $v$ for a fixed $x$. Problem~\eqref{SP} has the following properties.
\begin{lemma}
	\begin{enumerate}
		\item \eqref{SP} has a unique solution.
		\item If $x$ is Pareto critical, then $v(x) = 0$ and $\alpha(x) = 0$ .
		\item If $x$ is not Pareto critical, then $\alpha(x) < 0$.
		\item $x \mapsto v(x)$ and $x \mapsto \alpha(x)$ are continuous.
	\end{enumerate}
\end{lemma}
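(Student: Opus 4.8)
The plan is to eliminate $\beta$ from \eqref{SP}, reducing it to an unconstrained strongly convex problem, then read off properties 1--3 directly; property 4 will follow from a subsequence argument that exploits uniqueness of the minimizer.

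For fixed $x$, any feasible $(v,\beta)$ satisfies $\beta \geq \max_i \nabla F_i(x) v$, and since the objective increases in $\beta$, the optimal $\beta$ equals this maximum. Thus \eqref{SP} is equivalent to minimizing
\[
\phi_x(v) := \max_{i \in \{1,\dots,m\}} \nabla F_i(x) v + \tfrac{1}{2}\|v\|^2
\]
over $v \in \R^n$, with $\alpha(x) = \phi_x(v(x))$ the resulting optimal value. The term $v \mapsto \max_i \nabla F_i(x) v$ is convex (a pointwise maximum of linear maps) and $\tfrac12\|v\|^2$ is $1$-strongly convex, so $\phi_x$ is strongly convex and coercive; hence it has a unique minimizer $v(x)$, which gives 1.

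For 2 and 3 I would test $\phi_x$ against specific directions. If $x$ is Pareto critical, then $\max_i \nabla F_i(x) v \geq 0$ for every $v$ by definition, so $\phi_x(v) \geq \tfrac12\|v\|^2 \geq 0 = \phi_x(0)$; this forces $v(x) = 0$ and $\alpha(x) = 0$. If $x$ is not Pareto critical, I pick a descent direction $w$ with $c := \max_i \nabla F_i(x) w < 0$ and minimize $t \mapsto \phi_x(tw) = tc + \tfrac{t^2}{2}\|w\|^2$ over $t > 0$; the minimum equals $-c^2/(2\|w\|^2) < 0$, so $\alpha(x) \leq -c^2/(2\|w\|^2) < 0$.

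The main work is property 4. Assuming $F \in C^1$, the maps $x \mapsto \nabla F_i(x)$ are continuous and $(x,v) \mapsto \phi_x(v)$ is jointly continuous. Fix $x$ and $x_k \to x$. First I would show $(v(x_k))_k$ is bounded: from $\phi_{x_k}(v(x_k)) \leq \phi_{x_k}(0) = 0$ and Cauchy--Schwarz, $\tfrac12\|v(x_k)\|^2 \leq \|v(x_k)\| \max_i \|\nabla F_i(x_k)\|$, and the last factor stays bounded near $x$ by continuity. Then along any subsequence with $v(x_{k_j}) \to \bar v$, I pass to the limit in the optimality inequality $\phi_{x_{k_j}}(v(x_{k_j})) \leq \phi_{x_{k_j}}(w)$ (valid for all $w$) using joint continuity, obtaining $\phi_x(\bar v) \leq \phi_x(w)$ for all $w$; uniqueness then yields $\bar v = v(x)$. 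Since the bounded sequence $(v(x_k))_k$ has all its convergent subsequences converging to $v(x)$, it converges to $v(x)$, so $v$ is continuous; continuity of $\alpha(x) = \phi_x(v(x))$ then follows by composing continuous maps. The delicate point is the noncompactness of the feasible set, which I resolve through the coercivity-based boundedness estimate instead of invoking Berge's maximum theorem directly.
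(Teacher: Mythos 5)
Your proof is correct, but it is worth noting that the paper itself gives no proof of this lemma at all: it is stated as a summary of known results from \cite{FS2000}, and even the constrained analogues for \eqref{SPe} (Lemma~\ref{lem:alphaUnique_convexComb} and Lemma~\ref{lem:avCont}) are established mainly by citation to \cite{BFO2012}, with only two short remarks --- uniqueness from strict convexity of the equivalent min-max formulation, and continuity of $\alpha$ via the composition $q \mapsto (q,v(q))$ followed by $(q,w) \mapsto \max_i \nabla F_i(q) w + \frac{1}{2}\|w\|^2$. Your argument fills in what those citations conceal. Your elimination of $\beta$ and the strong convexity of $\phi_x$ correspond to the paper's appeal to strict convexity of the equivalent formulation; your direct computations for items 2 and 3 are straightforward and sound (the only pedantic remark is that in item 3 one should observe $w \neq 0$, which is automatic since $\nabla F_i(x)\,0 = 0$ is not negative). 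The genuinely substantive part you supply is the continuity of $v$: the coercivity-based bound $\|v(x_k)\| \leq 2\max_i \|\nabla F_i(x_k)\|$, followed by the subsequence-plus-uniqueness argument, is a clean, self-contained replacement for the cited proof, and your final composition step for $\alpha$ coincides exactly with the paper's own trick in Lemma~\ref{lem:avCont}. What the two routes buy is different: yours is elementary and needs only strong convexity and compactness, whereas the route through \cite{FS2000} and \cite{BFO2012} (via duality/subdifferentials) additionally yields the representation of the solution as a convex combination of (projected) gradients, stated in Lemma~\ref{lem:alphaUnique_convexComb}, which the paper exploits later and which your argument does not produce.
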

Thus, $x$ is Pareto critical iff $\alpha(x) = 0$. We want to use this descent method in a line search approach \cite{NW2006} and to this end choose a step length
\begin{equation*}
	t(x,v) = \max \left\{ s = \frac{1}{2^k} : k \in \mathbb{N}, \quad F(x + s v ) < F(x) + \sigma s DF(x)v \right\}
\end{equation*}
for some $\sigma \in (0,1)$. Using the descent direction and the step length described above, the sequence $(x_l)_l$ is calculated via the scheme
\begin{equation} \label{eq:OptStep}
	x_{l+1} = x_l + t(x_l,v(x_l)) v(x_l)
\end{equation}
for $l \geq 0$. As a convergence result the following was shown.
\begin{theorem}[\cite{FS2000}]
	Let $(x_l)_l$ be a sequence generated by the descent method described above. Then every accumulation point of $(x_l)_l$ is Pareto critical.
\end{theorem}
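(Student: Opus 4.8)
The plan is to argue by contradiction. Let $\bar x$ be an accumulation point of $(x_l)_l$ and suppose it is \emph{not} Pareto critical, so that $\alpha(\bar x) < 0$ by the preceding lemma. Choose a subsequence $(x_{l_k})_k$ with $x_{l_k} \to \bar x$. By continuity of $x \mapsto \alpha(x)$ (part 4 of the lemma), $\alpha(x_{l_k}) \to \alpha(\bar x) < 0$, so these values are bounded away from $0$ for large $k$; in particular the iterates $x_{l_k}$ are not Pareto critical, hence $v(x_{l_k}) \neq 0$ and the method genuinely moves there. The first auxiliary step I would carry out is to show that the full sequence $(F(x_l))_l$ converges to $F(\bar x)$: by construction $F(x_{l+1}) < F(x_l)$, so each component $F_i(x_l)$ is a strictly decreasing real sequence, and since it admits the convergent subsequence $F_i(x_{l_k}) \to F_i(\bar x)$ (continuity of $F$), the whole monotone sequence converges to that finite limit. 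Consequently the consecutive decrements vanish, i.e. $F_i(x_l) - F_i(x_{l+1}) \to 0$ for every $i$.

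Next I would combine this with the step length rule. Writing $t_l = t(x_l, v(x_l))$ and using that $v(x_l)$ solves \eqref{SP}, every component satisfies $\nabla F_i(x_l) v(x_l) \le \alpha(x_l) < 0$, so the Armijo-type definition of $t_l$ yields the strict decrease estimate
\[
F_i(x_l) - F_i(x_{l+1}) > -\sigma\, t_l\, \nabla F_i(x_l) v(x_l) \ge \sigma\, t_l\, |\alpha(x_l)| > 0 .
\]
Since the left-hand side tends to $0$ while $|\alpha(x_{l_k})| \to |\alpha(\bar x)| > 0$ along the subsequence, I conclude that $t_{l_k} \to 0$.

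The main obstacle is to turn $t_{l_k} \to 0$ into a contradiction. Because $t_l$ is the \emph{largest} $s \in \{2^{-k} : k \in \mathbb{N}\}$ satisfying the Armijo inequality, $t_{l_k} \to 0$ forces the doubled candidate step $2 t_{l_k}$ to violate it for all large $k$: there is an index $i_k$ with
\[
F_{i_k}(x_{l_k} + 2 t_{l_k} v(x_{l_k})) \ge F_{i_k}(x_{l_k}) + 2\sigma\, t_{l_k}\, \nabla F_{i_k}(x_{l_k}) v(x_{l_k}) .
\]
As there are only finitely many objectives, some fixed index $i^\ast$ recurs infinitely often, and after passing to a further subsequence I fix $i_k = i^\ast$. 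Dividing by $2 t_{l_k} > 0$, expanding the left-hand side by the mean value theorem (or a first-order Taylor expansion), and letting $k \to \infty$ — using $2 t_{l_k} \|v(x_{l_k})\| \to 0$ so that the evaluation points converge to $\bar x$, together with the continuity of $x \mapsto v(x)$ giving $v(x_{l_k}) \to v(\bar x)$ — I arrive at
\[
\nabla F_{i^\ast}(\bar x) v(\bar x) \ge \sigma\, \nabla F_{i^\ast}(\bar x) v(\bar x) .
\]
Since $\bar x$ is not Pareto critical, $v(\bar x)$ is a descent direction and $\nabla F_{i^\ast}(\bar x) v(\bar x) < 0$; dividing by this negative number reverses the inequality to $1 \le \sigma$, contradicting $\sigma \in (0,1)$. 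Hence $\bar x$ must be Pareto critical.

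I expect the delicate points to be exactly (i) the well-definedness of $t_l$ (the Armijo set is nonempty because $v(x_l)$ is a genuine descent direction, so the first-order term dominates for small $s$) and (ii) the clean passage to the limit in the final display, which relies both on the finiteness of the index set to extract a single failing objective $i^\ast$ and on the continuity of $v(\cdot)$ to identify the limiting direction as $v(\bar x)$. Everything else is a routine combination of monotone convergence and the descent estimate.
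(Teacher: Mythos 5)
Your proof is correct, and it is essentially the same Armijo line-search argument as the paper's: the paper only cites \cite{FS2000} for this unconstrained statement, but its own proof of the constrained analogue (Theorem~\ref{thm:convEq}) follows exactly this blueprint (monotone decrease of $F(x_l)$, hence $t_l DF(x_l)v(x_l) \to 0$; if $\alpha(\bar x)<0$ the step lengths along the subsequence vanish; maximality of the Armijo step then yields a violated inequality at a slightly larger step for some recurring objective index, and passing to the limit forces $\sigma \ge 1$, respectively $\nabla F_i(\bar x)\bar v \ge 0$). The only cosmetic differences are that you frame everything as one contradiction rather than a case split on $\limsup_l t_l$, you test the doubled step $2t_{l_k}$ instead of a fixed step $\beta_0\beta^q$, and you identify the limiting direction via continuity of $v(\cdot)$ (legitimate here by part 4 of the paper's lemma on \eqref{SP}), whereas the paper extracts an accumulation point of the bounded sequence $(v_l)$ — a device it needs because its setting allows approximate, hence possibly discontinuous, solutions of the subproblem.
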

For $m = 1$ (i.e.~scalar-valued optimization), this method is reduced to the method of steepest descent where the step length satisfies the Armijo rule (see e.g.~\cite{NW2006}). In what follows, we will generalize this approach to constrained MOPs.

\section{Descent methods for the constrained case}
\label{sec:Descent}
In this section we propose two descent methods for constrained MOPs. We first define a descent direction and a step length for MOPs with equality constraints (ECs) and then use two different active set strategies to incorporate inequality constraints (ICs). We handle ECs similar to \cite{BFO2012} where the exponential map from Riemannian geometry is used to locally obtain new feasible points along a geodesic in a given (tangent) direction. Since in general, geodesics are obtained by solving an ordinary differential equation, it is not efficient to evaluate the exponential map by calculating geodesics. Instead, we will use \emph{retractions} \cite{AM2012} that can be thought of as first order approximations of the exponential map.

\subsection{Equality constraints}
\label{subsec:Descent_EC}
Our approach for handling ECs is of predictor-corrector type \cite{AG1990}. In the predictor step, we choose a descent direction along which the ECs are only mildly violated, and perform a step in that direction with a proper step length. In the corrector step, the resulting point is mapped onto the set satisfying the ECs. To this end, we choose a descent direction lying in the tangent space of the set given by the ECs. To ensure the existence of these tangent spaces, we make the following assumptions on $F$ and $H$.
\begin{itemize}
	\item[(A1)]~ \quad$F: \R^n \rightarrow \R^m$ is $C^2$ (two times continuously differentiable).
	\item[(A2)]~ \quad $H: \R^n \rightarrow \R^{m_H}$ is $C^2$ with regular value $0$ (i.e.~$\text{rk}(DH(x)) = m_H$ for all $x \in \R^n$ with $H(x) = 0$).
\end{itemize}
The assumption (A2) is also known as the \emph{linear independence constraint qualification} (LICQ) and is a commonly used constraint qualification (see e.g.~\cite{NW2006}). Let $\mathcal{M} := H^{-1}(\{0\})$ be the set of points satisfying the ECs. According to the Level Set Theorem (\cite{RS2013}, Example 1.6.4), $\mathcal{M}$ is a closed, $(n - m_H)$-dimensional $C^2$-submanifold of $\R^n$. It is Riemannian with the inner product $(v,w) \mapsto v^T w$. The tangent space in $x \in \mathcal{M}$ is $T_x \mathcal{M} = \text{ker}(DH(x))$ and the tangent bundle is
\begin{equation*}
	T\mathcal{M} = \bigcup\limits_{x \in \mathcal{M}} \{x\} \times \text{ker}(DH(x)).
\end{equation*}
Consequently, if we consider \eqref{MOP} without ICs, we may also write
\begin{equation*}
	\min\limits_{x \in \mathcal{M}} F(x), \tag{Pe}
\end{equation*}
Before generalizing the unconstrained descent method, we first have to extend the definition of Pareto criticality to the equality constrained case.

\begin{definition} \label{def:criticalEq}
	A point $x \in \mathcal{M}$ is \emph{Pareto critical}, if
	\begin{equation*}
		\nexists v \in \text{ker}(DH(x)): DF(x) v < 0.
	\end{equation*}
\end{definition}

This means that $x$ is Pareto critical iff it is feasible and there is no descent direction in the tangent space of $\mathcal{M}$ in $x$. We will show later (in Lemma \ref{lem:stepSizeExists}) that this is indeed a first-order necessary condition for local Pareto optimality. We now introduce a modification of the subproblem \eqref{SP} to obtain descent directions of $F$ in the tangent space of $\mathcal{M}$.
\begin{equation} \label{SPe}
	\begin{aligned}
		& \underset{(v,\beta) \in \R^{n+1}}{\text{min}} &&  \beta + \frac{1}{2} \| v \|^2 && \\
		& \text{s.t.} &&  \nabla F_i(x) v \leq \beta &&\forall i \in \{1,...,m\}, \\
		& &&  \nabla H_j(x) v = 0 &&\forall j \in \{1,...,m_H\}. 
	\end{aligned} \tag{SPe}
\end{equation}
An equivalent formulation is
\begin{equation*}
	\min\limits_{v \in \text{ker}(DH(x))} \left( \max\limits_{i}\nabla F_i(x) v + \frac{1}{2} \| v \|^2 \right) =: \alpha(x).
\end{equation*}
Since $v = 0$ (or $(v,\beta) = (0,0)$) is always a feasible point for this subproblem, we have $\alpha(x) \leq 0$ for all $x \in \mathcal{M}$. In the next three lemmas we generalize some results about the subproblem \eqref{SP} from the unconstrained case to \eqref{SPe}.

\begin{lemma} \label{lem:critical_alpha0}
	$x \in \mathcal{M}$ is Pareto critical iff $\alpha(x) = 0$.
\end{lemma}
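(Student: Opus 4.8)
The plan is to prove the contrapositive equivalence, namely that $x$ is \emph{not} Pareto critical if and only if $\alpha(x) < 0$. Combined with the already-noted fact that $\alpha(x) \leq 0$ for all $x \in \mathcal{M}$ (since $v = 0$ is feasible for the subproblem), this immediately yields the stated claim. The whole argument is essentially the unconstrained reasoning of \cite{FS2000} transplanted onto the linear subspace $\ker(DH(x))$, using its equivalent formulation $\alpha(x) = \min_{v \in \ker(DH(x))} \left( \max_i \nabla F_i(x) v + \frac{1}{2} \| v \|^2 \right)$.

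For the first direction, I would assume $x$ is not Pareto critical, so by Definition \ref{def:criticalEq} there exists $v \in \ker(DH(x))$ with $\nabla F_i(x) v < 0$ for all $i$, hence $c := \max_i \nabla F_i(x) v < 0$. The key observation is that $\ker(DH(x))$ is a linear subspace, so $t v$ is feasible for every $t > 0$. Substituting $t v$ into the objective gives the value $t c + \frac{t^2}{2} \| v \|^2$. Since $c < 0$, the linear term dominates the quadratic term for small $t$, so this expression is strictly negative for $t > 0$ sufficiently small, and therefore $\alpha(x) \leq t c + \frac{t^2}{2} \| v \|^2 < 0$.

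For the reverse direction, I would assume $\alpha(x) < 0$ and take a minimizer $v \in \ker(DH(x))$, so that $\max_i \nabla F_i(x) v + \frac{1}{2} \| v \|^2 < 0$. Because $\frac{1}{2} \| v \|^2 \geq 0$, this forces $\max_i \nabla F_i(x) v < 0$, i.e. $\nabla F_i(x) v < 0$ for every $i$. Thus $v$ is a descent direction of $F$ lying in $\ker(DH(x))$, and hence $x$ is not Pareto critical, completing the equivalence.

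I do not anticipate a serious obstacle here. The only point deserving a moment of care is that the minimum defining $\alpha(x)$ is genuinely attained, so that in the reverse direction the inequality $\alpha(x) < 0$ supplies an actual witness $v$. This follows because the objective is continuous, strictly convex, and coercive (the quadratic term forces it to $+\infty$ as $\|v\| \to \infty$, while the maximum of finitely many linear functionals grows at most linearly) on the closed subspace $\ker(DH(x))$; alternatively, one can sidestep attainment entirely by taking any feasible $v$ whose objective value is negative, which exists whenever the infimum is negative.
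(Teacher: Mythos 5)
Your proof is correct. The paper offers no actual proof of this lemma (it is dismissed as easy), and your argument --- scaling a descent direction $tv$ within the subspace $\ker(DH(x))$ so that the linear term dominates the quadratic one, and conversely extracting a descent direction from any feasible $v$ with negative objective value since $\tfrac{1}{2}\|v\|^2 \geq 0$ --- is exactly the standard reasoning from the unconstrained case in \cite{FS2000} transplanted to the tangent space, which is what the paper implicitly relies on; your remark that attainment of the minimum can be sidestepped entirely is a correct and clean observation.
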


Lemma \ref{lem:critical_alpha0} is easy to proof and shows that \eqref{SPe} can indeed be used to obtain a descent direction $v$ in the tanget space if $x$ is not Pareto critical. The next lemma will show that the solution of \eqref{SPe} is a unique convex combination of the projected gradients of $F_i$ onto the tangent space of $\mathcal{M}$.

\begin{lemma} \label{lem:alphaUnique_convexComb}
	\begin{enumerate}
		\item \eqref{SPe} has a unique solution.
		\item $v \in \R^n$ solves \eqref{SPe} iff there exist $\lambda_i \geq 0$ for $i \in I(x,v)$ so that
		\begin{equation*}
			v = - \sum_{i \in I(x,v)} \lambda_i \nabla_\mathcal{M} F_i(x), \quad \sum_{i \in I(x,v)} \lambda_i = 1,
		\end{equation*}
		where
		\begin{equation*}
			I(x,v) := \{i \in \{1,...,m\} : \nabla F_i(x) v = \max_{i \in \{1,...,m\}} \nabla F_i(x) v\}. 
		\end{equation*}
		$\nabla_\mathcal{M} F_i(x)$ denotes the gradient of $F_i$ as a function on $\mathcal{M}$, i.e.~$\nabla_\mathcal{M} F_i(x)$ is the projection of $\nabla F_i(x)$ onto $T_x \mathcal{M} = \text{ker}(DH(x))$.
	\end{enumerate}
\end{lemma}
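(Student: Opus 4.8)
The plan is to treat the two parts separately, handling uniqueness through strict convexity and the characterization through the Karush--Kuhn--Tucker (KKT) conditions. For part (1), I would work with the equivalent reformulation
\[
\alpha(x) = \min_{v \in \text{ker}(DH(x))} \left( \max_i \nabla F_i(x) v + \frac{1}{2}\|v\|^2 \right).
\]
The map $v \mapsto \max_i \nabla F_i(x) v + \frac{1}{2}\|v\|^2$ is \emph{strictly} convex, being the sum of the convex function $\max_i \nabla F_i(x) v$ (a pointwise maximum of linear functions) and the strictly convex $\frac{1}{2}\|v\|^2$; it is moreover coercive, since the quadratic term dominates the linear terms as $\|v\| \to \infty$. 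Minimizing a strictly convex, coercive, continuous function over the closed convex set $\text{ker}(DH(x))$ produces exactly one minimizer $v$, and putting $\beta = \max_i \nabla F_i(x) v$ recovers the unique solution $(v,\beta)$ of \eqref{SPe}.

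For part (2), I would exploit that \eqref{SPe} is a convex problem whose constraints are linear in $(v,\beta)$, so that the KKT conditions are both necessary and sufficient for optimality. Forming the Lagrangian with multipliers $\lambda_i \geq 0$ for the inequalities and $\mu_j$ for the equalities, stationarity in $\beta$ yields $\sum_i \lambda_i = 1$, complementary slackness forces $\lambda_i = 0$ whenever $i \notin I(x,v)$ (since $\beta = \max_i \nabla F_i(x) v$ at an optimum), and stationarity in $v$ gives
\[
v = -\sum_{i \in I(x,v)} \lambda_i \nabla F_i(x) - \sum_{j} \mu_j \nabla H_j(x).
\]

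The crucial step is then to pass from the full gradients to their tangential parts. Using the orthogonal decomposition $\R^n = \text{ker}(DH(x)) \oplus (\text{ker}(DH(x)))^\perp$ together with $(\text{ker}(DH(x)))^\perp = \text{span}\{\nabla H_1(x),\dots,\nabla H_{m_H}(x)\}$, I would write each $\nabla F_i(x)$ as $\nabla_\mathcal{M} F_i(x)$ plus a normal part and collect all $\nabla H_j(x)$-terms into a single normal vector. Since $v \in \text{ker}(DH(x))$ and every $\nabla_\mathcal{M} F_i(x)$ lies in $\text{ker}(DH(x))$ by definition, the leftover normal component must vanish; by LICQ (A2) the gradients $\nabla H_j(x)$ are linearly independent, so all combined normal coefficients are zero and the representation $v = -\sum_{i \in I(x,v)} \lambda_i \nabla_\mathcal{M} F_i(x)$ follows.

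The converse direction reverses this computation: given such a representation, $v$ automatically lies in $\text{ker}(DH(x))$, so with $\beta = \max_i \nabla F_i(x) v$ the pair $(v,\beta)$ is feasible, and admissible multipliers $\mu_j$ can be reconstructed from the normal parts of the $\nabla F_i(x)$ to verify every KKT condition; sufficiency of KKT for this convex program then certifies optimality. The main obstacle is purely the bookkeeping in this tangential/normal splitting---ensuring the $\nabla H_j(x)$-terms cancel exactly---which is precisely where assumption (A2) enters.
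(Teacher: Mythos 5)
Your proof is correct, but it follows a genuinely different route from the paper's. The paper disposes of part (1) exactly as you do (strict convexity of the reduced objective over $\text{ker}(DH(x))$), but for part (2) it simply defers to \cite[Lemma 4.1]{BFO2012}, where the characterization is obtained from nonsmooth analysis: one applies Fermat's rule $0 \in \partial\bigl(\max_i \nabla F_i(x)\,\cdot\, + \tfrac{1}{2}\|\cdot\|^2\bigr)(v)$ to the reduced problem on the tangent space and uses the fact that the subdifferential of a pointwise maximum of linear functions is the convex hull of the active gradients (here, their projections $\nabla_\mathcal{M} F_i(x)$), which yields the convex-combination formula in one step. You instead work with the explicit quadratic program in $\R^{n+1}$, invoke necessity and sufficiency of KKT (legitimate here: convex objective, affine constraints, so no further constraint qualification is needed), and then pass to the tangential parts via the orthogonal splitting $\R^n = \text{ker}(DH(x)) \oplus \text{span}\{\nabla H_j(x)\}$. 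Both arguments are sound; yours is more elementary and self-contained (standard smooth Lagrangian duality plus linear algebra), while the subdifferential route is shorter once the nonsmooth machinery is available and is the one that extends verbatim to the Riemannian setting of \cite{BFO2012}. One small correction to your bookkeeping: LICQ (A2) is not actually what makes the normal component cancel. The cancellation follows already from $\text{ker}(DH(x)) \cap \text{ker}(DH(x))^{\perp} = \{0\}$ together with the identity $\text{ker}(DH(x))^{\perp} = \text{span}\{\nabla H_1(x),\dots,\nabla H_{m_H}(x)\}$, which holds for any matrix $DH(x)$; LICQ is only needed if you want the multipliers $\mu_j$ to be uniquely determined, which the lemma never asks for.
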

\begin{proof}
	The first result follows from the strict convexity of the objective function in the equivalent formulation of \eqref{SPe}. The second result follows from theoretical results about subdifferentials. A detailed proof is shown in \cite[Lemma 4.1]{BFO2012}.
\end{proof}

By the last lemma, the function $v: \mathcal{M} \rightarrow \text{ker}(DH(x))$ which maps $x \in \mathcal{M}$ to the descent direction given by \eqref{SPe} is well defined. The following lemma shows that it is continuous.

\begin{lemma} \label{lem:avCont}
	The maps $v: \mathcal{M} \rightarrow \text{ker}(DH(x))$ and $\alpha: \mathcal{M} \rightarrow \R$ are continuous.
\end{lemma}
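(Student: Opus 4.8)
The plan is to establish continuity of $\alpha$ first, and then leverage it together with the explicit characterization from Lemma~\ref{lem:alphaUnique_convexComb} to obtain continuity of $v$. The key tool throughout will be the equivalent formulation $\alpha(x) = \min_{v \in \ker(DH(x))} \left( \max_i \nabla F_i(x) v + \frac{1}{2}\|v\|^2 \right)$, which exhibits $\alpha$ as the optimal value of a parametrized convex minimization problem. First I would argue that $\alpha$ is continuous. This is a stability-of-optimal-value statement for a parametric optimization problem where both the objective and the feasible set $\ker(DH(x))$ vary with $x$. The objective $(x,v) \mapsto \max_i \nabla F_i(x) v + \frac{1}{2}\|v\|^2$ is jointly continuous in $x$ and $v$ because each $\nabla F_i$ is continuous (by assumption (A1), $F$ is $C^2$) and the pointwise maximum of finitely many continuous functions is continuous. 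The feasible set is the kernel of $DH(x)$, which by assumption (A2) has constant rank $m_H$ on $\mathcal{M}$; this constant-rank condition is exactly what guarantees that the set-valued map $x \mapsto \ker(DH(x))$ varies continuously (it is both inner and outer semicontinuous). Combining continuity of the objective with continuity of the feasible-set map, a Berge-type maximum theorem yields continuity of the optimal value $\alpha$.

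Next I would handle $v$. The strict convexity of the objective (from the $\frac{1}{2}\|v\|^2$ term) makes the minimizer $v(x)$ unique, so $v$ is a genuine function rather than a set-valued map, and Berge's theorem additionally gives that the argmin map is outer semicontinuous with a single-valued, hence continuous, selection. Concretely, I would take an arbitrary sequence $x_k \to x$ in $\mathcal{M}$ and show $v(x_k) \to v(x)$. The sequence $(v(x_k))_k$ is bounded: since $v=0$ is feasible at every point, $\alpha(x_k) \le 0$, which forces $\frac{1}{2}\|v(x_k)\|^2 \le -\max_i \nabla F_i(x_k) v(x_k) \le \|\nabla F(x_k)\|\,\|v(x_k)\|$, giving a uniform bound via continuity of the gradients on the convergent (hence eventually bounded) sequence $x_k$. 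Any convergent subsequence $v(x_{k_j}) \to \bar v$ must have $\bar v \in \ker(DH(x))$ by continuity of $DH$, and by continuity of the objective together with the just-established continuity of $\alpha$, the limit $\bar v$ attains the optimal value $\alpha(x)$. By uniqueness of the minimizer, $\bar v = v(x)$. Since every convergent subsequence has the same limit and the sequence is bounded, the whole sequence converges to $v(x)$.

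The main obstacle is making the continuity of the feasible-set map $x \mapsto \ker(DH(x))$ precise and rigorous, since this is where the geometric content of assumption (A2) enters. The constant-rank hypothesis is essential: without it, kernels can jump in dimension and the optimal value can fail to be continuous. I would verify both directions of semicontinuity explicitly, or alternatively bypass the abstract set-valued machinery by introducing a continuous local frame (orthonormal basis) $B(x)$ for $\ker(DH(x))$, which exists locally because $DH(x)$ has locally constant rank (e.g.\ via continuity of the orthogonal projection $P(x)$ onto $\ker(DH(x))$, itself expressible through $DH(x)$ and its pseudoinverse). Substituting $v = B(x)w$ reduces the problem to an \emph{unconstrained} strictly convex minimization in $w$ with jointly continuous objective, at which point continuity of both the optimal value and the unconstrained minimizer follows from standard arguments, and the claim transfers back via $v(x) = B(x)w(x)$. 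Either route closes the proof; the frame-substitution route is cleanest and most self-contained.
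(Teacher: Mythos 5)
Your proof is correct, but it takes a genuinely different route from the paper's. The paper settles the lemma in two lines: continuity of $v$ is simply quoted from Lemma 5.1 of Bento, Ferreira and Oliveira \cite{BFO2012}, and continuity of $\alpha$ then follows for free by writing $\alpha$ as the composition of the continuous maps $q \mapsto (q,v(q))$ and $(q,w) \mapsto \max_i \nabla F_i(q) w + \tfrac{1}{2}\|w\|^2$; so the paper's logical order is $v$ first (by citation), then $\alpha$. You reverse this: you prove continuity of the optimal value $\alpha$ directly as a stability statement for the parametric problem, using that (A2) makes $x \mapsto \ker(DH(x))$ a continuously varying constant-rank subspace field (equivalently, continuity of the orthogonal projection $P(x) = I - DH(x)^T\bigl(DH(x)DH(x)^T\bigr)^{-1}DH(x)$, or your local-frame substitution $v = B(x)w$), and you then deduce continuity of $v$ from boundedness of the minimizers plus uniqueness via a subsequence argument. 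Your route buys self-containedness: it eliminates the external citation, and it isolates exactly where (A2) is needed (inner semicontinuity of the kernels, for the $\limsup$ direction; the $\liminf$ direction needs only continuity of $DH$). The paper's route buys brevity at the cost of outsourcing the only nontrivial step. One caution if you phrase the argument via Berge's maximum theorem: the classical statement assumes a compact-valued feasible correspondence, which $\ker(DH(x))$ is not, so you must first restrict to a ball using the coercivity bound $\|v(x)\| \leq 2\max_i \|\nabla F_i(x)\|$ (which you derive anyway when proving boundedness); your frame-substitution alternative, which reduces everything to an unconstrained strictly convex problem with jointly continuous data, sidesteps this issue entirely and is the cleaner way to finish.
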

\begin{proof}
The continuity of $v$ is shown in \cite[Lemma 5.1]{BFO2012}. The continuity of $\alpha$ can be seen when decomposing $\alpha$ into the two following continuous maps:
	\begin{equation*}
		\begin{aligned}
			&\mathcal{M} \rightarrow \mathcal{M} \times \R^n, \quad q \mapsto (q,v(q)),\\
			&\mathcal{M} \times \R^n \rightarrow \R, \quad (q,w) \mapsto \max_i \nabla F_i(q) w + \frac{1}{2} \| w \|^2.
		\end{aligned}
	\end{equation*}
\end{proof}

Similar to \cite{FS2000}, it does not matter for the convergence theory if we take the exact solution of \eqref{SPe} or an inexact solution in the following sense.

\begin{definition}
	An \emph{approximate solution of \eqref{SPe} at $x \in \mathcal{M}$ with tolerance $\gamma \in (0,1]$} is a $v' \in \R^n$ such that 
	\begin{equation*}
		v' \in \text{ker}(DH(x)) \quad \textrm{ and } \quad \max\limits_{i}\nabla F_i(x) v' + \frac{1}{2} \| v' \|^2 \leq \gamma \alpha(x).
	\end{equation*} 
\end{definition}

If $x \in \mathcal{M}$ is not Pareto critical, we obviously still have $DF(x) v' < 0$ for all approximate solutions $v'$. Thus, approximate solutions are still descent directions in the tangent space. By setting the tolerance $\gamma$ to $1$, we again obtain the exact solution. For the relative error $\delta$ of the optimal value $\alpha$ of \eqref{SPe} we get
\begin{equation*}
	\delta(x) := \frac{| \max_i \nabla F_i(q) v' + \frac{1}{2} \| v' \|^2 - \alpha(x)|}{|\alpha(x)|} \leq \frac{|\gamma \alpha(x) - \alpha(x)|}{|\alpha(x)|} = 1 - \gamma.
\end{equation*}
By solving \eqref{SPe}, we can now compute descent directions in the tangent space of $\mathcal{M}$ at a feasible point $x \in \mathcal{M}$. In order to show that our method generates a decreasing sequence of feasible points, we will need some properties of the corrector step. To this end, we introduce a retraction map on $\mathcal{M}$ in the following definition.
 
\begin{definition} \label{def:projection}
	For $y \in \R^n$  consider the set
	\begin{equation*}
		P_\mathcal{M}(y) := \{ x \in \mathcal{M}: \| x - y \| = \min\limits_{x \in \mathcal{M}} \| x - y \| \}.
	\end{equation*}
	According to \cite[Lemma 3.1]{AM2012}, for every $x \in \mathcal{M}$ there exists some $\epsilon > 0$ such that $P_\mathcal{M}(y)$ contains only one element for all $y \in U_\epsilon(x) := \{ y \in \R^n : \| y - x \| < \epsilon \}$. We can thus define the map
	\begin{equation*}
		\pi: \mathcal{V} \rightarrow \mathcal{M}, \quad y \mapsto P_\mathcal{M}(y)
	\end{equation*}
	in a neighborhood $\mathcal{V}$ of $\mathcal{M}$. It was also shown that this map is $C^1$. According to \cite[Proposition 3.2]{AM2012}, the map
	\begin{equation*}
		R_\pi: T\mathcal{M} \rightarrow \mathcal{M}, \quad (x,v) \mapsto \pi(x+v)
	\end{equation*} 
	is a $C^1$\emph{-retraction on $\mathcal{M}$}, i.e.~for all $x \in \mathcal{M}$ there is a neighborhood $\mathcal{U}$ of $(x,0) \in T \mathcal{M}$ such that
	\begin{enumerate}
		\item the restriction of $R_\pi$ onto $\mathcal{U}$ is $C^1$,
		\item $R_\pi(q,0) = q \quad \forall (q,0) \in \mathcal{U}$,
		\item $DR_\pi(q,\cdot)(0) = \textrm{id}_{\text{ker}(DH(q))} \quad \forall (q,0) \in \mathcal{U}$.
	\end{enumerate}
	(More precisely, it was shown that $\pi$ is $C^{k-1}$ and $R_\pi$ is a $C^{k-1}$-retraction on $\mathcal{M}$ if $\mathcal{M}$ is $C^k$.) In all $y \in \R^n$ where $\pi$ is undefined, we set $\pi(y)$ to be some element of $P_{\mathcal{M}}(y)$. This will not matter for our convergence results since we only need the retraction properties when we are near $\mathcal{M}$.
\end{definition}

Using $v$ and $\pi$, we can now calculate proper descent directions for the equality constrained case. As in all line search strategies, we furthermore have to choose a step length which assures that the new feasible point is an improvement over the previous one and that the resulting sequence converges to a Pareto critical point. To this end -- for $x \in \mathcal{M}$ and $v \in \text{ker}(DH(x))$ with $DF(x)v < 0$ given -- consider the step length
\begin{equation} \label{eq:stepSize}
	t = \beta_0 \beta^k,
\end{equation}
where
\begin{equation*}
	k := \min\{k \in \mathbb{N}: F(\pi(x + \beta_0 \beta^k v)) < F(x) + \sigma \beta_0 \beta^k DF(x) v \}
\end{equation*}
with $\beta_0 > 0$, $\beta \in (0,1)$ and $\sigma \in (0,1)$. To show that such a $k$ always exists, we first require the following lemma.

\begin{lemma} \label{lem:diffQuot}
	Let $n_1, n_2 \in \mathbb{N}$, $U \subseteq \R^{n_1}$ open and $f: U \rightarrow \R^{n_2}$ $C^2$. Let $(x_k)_k \in U$, $(v_k)_k \in \R^{n_1}$ and $(t_k)_k \in \R^{>0}$ such that $\lim\limits_{k \rightarrow \infty} x_k = x \in U$, $\lim\limits_{k \rightarrow \infty} v_k = v$ and $\lim\limits_{k \rightarrow \infty} t_k = 0$. Then
	\begin{equation*}
		\lim\limits_{k \rightarrow \infty} \frac{f(x_k + t_k v_k) - f(x_k)}{t_k} = Df(x) v.
	\end{equation*}
	If additionally  $Df(x) v < 0$, then for all $\sigma \in (0,1)$ there exists some $K \in \mathbb{N}$ such that
	\begin{equation*}
		f(x_k + t_k v_k) < f(x_k) + \sigma t_k Df(x) v \quad \forall k \geq K.
	\end{equation*}
\end{lemma}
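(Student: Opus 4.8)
The plan is to derive the limit claim from the fundamental theorem of calculus together with a uniform-convergence argument, and then to obtain the Armijo-type inequality almost for free by exploiting the strict slack between $Df(x)v$ and $\sigma Df(x)v$. First I would check that the segment $\{x_k + s t_k v_k : s \in [0,1]\}$ lies in $U$ for all large $k$: since $U$ is open and $x \in U$, choose $r > 0$ with $\overline{U_r(x)} \subseteq U$; because $x_k \to x$ and $t_k v_k \to 0\cdot v = 0$, the estimate $\| x_k + s t_k v_k - x \| \le \| x_k - x \| + t_k \| v_k \|$ is $< r$ uniformly in $s \in [0,1]$ once $k$ is large, so $f$ is defined along the segment. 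On this range $f$ is $C^1$ (here $C^1$ already suffices, even though (A1) assumes $C^2$), and componentwise integration gives
\begin{equation*}
	\frac{f(x_k + t_k v_k) - f(x_k)}{t_k} = \int_0^1 Df(x_k + s t_k v_k)\, v_k \; ds.
\end{equation*}

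Next I would pass to the limit inside the integral. The closed ball $\overline{U_r(x)}$ is compact, so $Df$ is uniformly continuous on it; combined with $x_k + s t_k v_k \to x$ uniformly in $s$, this yields $Df(x_k + s t_k v_k) \to Df(x)$ uniformly in $s \in [0,1]$. Together with $v_k \to v$ and the boundedness of $(v_k)_k$, the integrand converges uniformly to $Df(x)v$, hence the integral converges to $\int_0^1 Df(x)v \; ds = Df(x)v$, which is the first claim.

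For the second claim I would write $a := Df(x)v \in \R^{n_2}$ and note that the hypothesis $a < 0$ together with $\sigma \in (0,1)$ gives, componentwise, $\sigma a - a = (1-\sigma)(-a) > 0$; that is, the limit $a$ lies strictly below the target $\sigma a$ in every coordinate. The desired inequality, divided by $t_k > 0$, reads $\frac{f(x_k + t_k v_k) - f(x_k)}{t_k} < \sigma a$, and by the first part the left-hand side converges to $a$. Taking for each of the finitely many coordinates $j$ the gap $\varepsilon_j := (1-\sigma)(-a_j) > 0$ and choosing $K$ large enough that every component of the quotient is within $\varepsilon_j$ of $a_j$ for all $k \ge K$, I obtain $\frac{f(x_k + t_k v_k) - f(x_k)}{t_k} < \sigma a$ componentwise; multiplying back by $t_k > 0$ gives the assertion.

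I do not anticipate a serious obstacle, as the lemma is essentially a careful statement of differentiability plus continuity of $Df$. The only points requiring attention are the interchange of limit and integral, which the uniform-convergence argument above justifies, and the bookkeeping for the componentwise vector inequalities, where the use of $\sigma < 1$ to create a uniform positive gap $\varepsilon_j$ is the decisive observation.
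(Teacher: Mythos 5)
Your proof is correct. The paper disposes of this lemma in one sentence, ``the proof follows by considering the Taylor series expansion of $f$,'' i.e.\ it expands $f(x_k + t_k v_k) = f(x_k) + t_k Df(x_k) v_k + R_k$ and uses the $C^2$ hypothesis to bound the remainder $R_k$ by a constant times $t_k^2 \| v_k \|^2$, after which continuity of $Df$ gives the limit. You instead write the difference quotient as $\int_0^1 Df(x_k + s t_k v_k)\, v_k \, ds$ and pass to the limit under the integral via uniform continuity of $Df$ on a compact ball; this is a genuinely different (if closely related) mechanism, and it buys something concrete: your argument needs only $C^1$ regularity, so you have in effect observed that the lemma's $C^2$ hypothesis is stronger than necessary for this statement (the paper needs $C^2$ elsewhere, e.g.\ for the manifold structure and the retraction, but not here). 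Your handling of the preliminary point that the paper silently skips --- that the segment from $x_k$ to $x_k + t_k v_k$ eventually lies in $U$ --- and the componentwise gap argument $\sigma a - a = (1-\sigma)(-a) > 0$ for the Armijo-type conclusion are both exactly what is needed; the latter matches what the paper's Taylor route would also require, since in either case the second claim is pure bookkeeping once the limit is established.
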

\begin{proof}
	The proof follows by considering the Taylor series expansion of $f$.
\end{proof}

With the last lemma we can now show the existence of the step length \eqref{eq:stepSize}.

\begin{lemma} \label{lem:stepSizeExists}
	Let $v \in \text{ker}(DH(x))$ with $DF(x)v < 0$. Let $\beta_0 > 0$, $\beta \in (0,1)$ and $\sigma \in (0,1)$. Then
	\begin{align} \label{eq:setArmijoIneq}
		\{k \in \mathbb{N}: F(\pi(x + \beta_0 \beta^k v)) < F(x) + \sigma \beta_0 \beta^k DF(x) v \} \neq \emptyset
	\end{align}
	and there exists some $K \in \mathbb{N}$ such that this set contains all $k > K$. Particularly
	\begin{equation*}
		\lim\limits_{k \rightarrow \infty}\frac{F(\pi(x + \beta_0 \beta^k v)) - F(x)}{\beta_0 \beta^k} = DF(x)v.
	\end{equation*}
\end{lemma}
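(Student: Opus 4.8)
The plan is to reduce the statement to Lemma~\ref{lem:diffQuot} by absorbing the retraction into a convergent sequence of directions, so that the lemma is applied to the $C^2$ map $F$ itself rather than to the merely $C^1$ composition $F \circ \pi$.

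First I would examine the curve $g(t) := \pi(x + t v)$ for small $t \geq 0$. Since $x \in \mathcal{M}$ and $v \in \ker(DH(x))$, the pair $(x,tv)$ lies in the neighborhood $\mathcal{U}$ from Definition~\ref{def:projection} once $t$ is small enough, so $g(t) = R_\pi(x, tv)$ is well defined and $C^1$ near $t = 0$. By property~2 of the retraction we have $g(0) = R_\pi(x,0) = x$, and by property~3, since $v \in \ker(DH(x))$,
\begin{equation*}
	g'(0) = \frac{d}{dt}\Big|_{t=0} R_\pi(x, tv) = DR_\pi(x,\cdot)(0)\,v = v.
\end{equation*}
Hence $g$ is differentiable at $0$ with $g(0) = x$ and $g'(0) = v$, so we may write $\pi(x + t v) = x + t\bigl(v + \rho(t)\bigr)$ with $\rho(t) \to 0$ as $t \to 0^+$.

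Now set $t_k := \beta_0 \beta^k$, so that $t_k \to 0$, and define $v_k := v + \rho(t_k)$, which converges to $v$. For $k$ large enough that $g$ is defined at $t_k$ we have $\pi(x + t_k v) = x + t_k v_k$ together with $F(x) = F(\pi(x))$, so applying Lemma~\ref{lem:diffQuot} with $f = F$ (which is $C^2$ by~(A1)), the constant sequence $x_k \equiv x$, the directions $v_k \to v$ and the step sizes $t_k \to 0$ yields
\begin{equation*}
	\lim_{k \to \infty} \frac{F(\pi(x + \beta_0 \beta^k v)) - F(x)}{\beta_0 \beta^k} = DF(x) v,
\end{equation*}
which is the asserted limit. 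Since $DF(x) v < 0$, the second part of Lemma~\ref{lem:diffQuot} furnishes some $K \in \mathbb{N}$ with $F(x + t_k v_k) < F(x) + \sigma t_k DF(x) v$ for all $k \geq K$; rewriting $x + t_k v_k = \pi(x + \beta_0 \beta^k v)$ shows that every $k \geq K$ lies in the set~\eqref{eq:setArmijoIneq}, so in particular it is nonempty.

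I expect the only real obstacle to be the low regularity of the retraction: $F \circ \pi$ is only $C^1$, so a direct second-order Taylor argument for the Armijo inequality is unavailable. The device of keeping $f = F$ and encoding the retraction in the perturbed directions $v_k$ sidesteps this, provided one verifies carefully that $g'(0) = v$ so that indeed $v_k \to v$. This identity is exactly where retraction property~3 enters, and it is the step I would be most careful about.
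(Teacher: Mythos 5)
Your proposal is correct and follows essentially the same route as the paper: both apply Lemma~\ref{lem:diffQuot} with $f = F$, constant $x_k \equiv x$, step sizes $t_k = \beta_0\beta^k$, and perturbed directions $v_k = \frac{\pi(x+t_k v)-x}{t_k}$ (your $v + \rho(t_k)$), with the key step in each case being that retraction property~3, $DR_\pi(x,\cdot)(0) = \mathrm{id}_{\ker(DH(x))}$, forces $v_k \to v$. The only cosmetic difference is that you phrase this as differentiability of the curve $t \mapsto R_\pi(x,tv)$ at $0$, whereas the paper writes it directly as a limit of difference quotients.
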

\begin{proof} 
	We use Lemma \ref{lem:diffQuot} with $x_k = x$, $t_k = \beta_0 \beta^k$ and $v_k = \frac{\pi(x + \beta_0 \beta^k v) - x}{\beta_0 \beta^k}$. We have to show that
	\begin{equation*}
		\lim\limits_{k \to \infty} \frac{\pi(x + \beta_0 \beta^k v) - x}{\beta_0 \beta^k} = v.
	\end{equation*}
	Since $R_\pi$ is a retraction, we have
	\begin{equation*}
	\lim\limits_{k \to \infty} \frac{\pi(x + \beta_0 \beta^k v) - x}{\beta_0 \beta^k} = \lim\limits_{k \to \infty} \frac{R_\pi(x,\beta_0 \beta^k v) - R_\pi(x,0)}{\beta_0 \beta^k} = DR_\pi(x,\cdot)(0)(v) = v.
	\end{equation*}
\end{proof}

The inequality in the set \eqref{eq:setArmijoIneq} is called the \emph{Armijo inequality}. In particular, the last lemma shows that Pareto criticality (according to Definition~\ref{def:criticalEq}) is a necessary condition for Pareto optimality since $\pi(x + \beta_0 \beta^k v) \rightarrow x$ for $k \rightarrow \infty$ and $F(\pi(x + \beta_0 \beta^k v)) < F(x)$ for $k$ large enough if $v$ is a descent direction.

The descent method for equality constrained MOPs is summarized in Algorithm~\ref{algo:eq}. To obtain a feasible initial point $x_0 \in \mathcal{M}$, we evaluate $\pi$ at the initial point $x \in \R^n$.

\begin{algorithm} 
	\caption{(Descent method for equality constrained MOPs)}
	\label{algo:eq}
	\begin{algorithmic}[1]
		\Require $x \in \R^n$, $\gamma \in (0,1]$, $\beta_0 > 0$, $\beta \in (0,1)$, $\sigma \in (0,1)$.
		\State Compute $x_0 = \pi(x)$.
		\For{$l = 0$, $1$, ...}
			\State Compute an approximate solution $v_l$ of \eqref{SPe} at $x_l$ with tolerance $\gamma$.
			\If{$x_l$ is Pareto critical (i.e.~$\alpha(x_l) = 0$)}
				\State Stop.
			\Else
				\State Compute $t_l$ as in \eqref{eq:stepSize} for $x_l$ and $v_l$ with parameters $\beta_0$, $\beta$ and $\sigma$.
				\State Set $x_{l+1} = \pi(x_l + t_l v_l)$.
			\EndIf
		\EndFor
	\end{algorithmic}
\end{algorithm}

Similar to \cite[Theorem 5.1]{BFO2012}, we have the following convergence result. 

\begin{theorem} \label{thm:convEq}
	Let $(x_l)_l \in \mathcal{M}$ be a sequence generated by Algorithm \ref{algo:eq}. Then $(x_l)_l$ is either finite (and the last element of the sequence is Pareto critical) or each accumulation point of $(x_l)_l$ is Pareto critical. 
\end{theorem}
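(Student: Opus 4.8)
The plan is to mirror the Fliege--Svaiter argument on the manifold $\mathcal{M}$, using Lemma~\ref{lem:critical_alpha0} to reduce Pareto criticality of an accumulation point to the condition $\alpha(\bar x) = 0$. The finite case is immediate: the algorithm terminates only in its \texttt{Stop} branch, which is reached exactly when $\alpha(x_l) = 0$, so the last iterate is critical by Lemma~\ref{lem:critical_alpha0}. The remaining work is the infinite case, which I would treat by contradiction: assume $(x_l)_l$ is infinite with accumulation point $\bar x$ (so $x_{l_k}\to\bar x$ along a subsequence) and suppose $\bar x$ is \emph{not} critical, i.e.\ $\alpha(\bar x) < 0$.

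First I would establish monotone decrease. Since the algorithm never stops, each $v_l$ is an approximate solution at a non-critical $x_l$, so $\max_i\nabla F_i(x_l)v_l \le \gamma\alpha(x_l) < 0$ and in particular $DF(x_l)v_l < 0$; the Armijo condition in \eqref{eq:stepSize} then forces $F_i(x_{l+1}) < F_i(x_l)$ for every $i$, so each $F_i(x_l)$ is strictly decreasing. Continuity of $F$ gives $F(x_{l_k})\to F(\bar x)$, and a strictly decreasing sequence with a convergent subsequence converges, so $F(x_{l+1}) - F(x_l)\to 0$ along the \emph{full} sequence. Combining this with the componentwise Armijo bound $F_i(x_l) - F_i(x_{l+1}) > \sigma\gamma\, t_l\,|\alpha(x_l)| \ge 0$ yields $t_l\,\alpha(x_l)\to 0$. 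Restricting to the subsequence $l_k$ and using continuity of $\alpha$ (Lemma~\ref{lem:avCont}) together with $\alpha(\bar x)<0$, I conclude $t_{l_k}\to 0$, hence the backtracking exponent $k_{l_k}\to\infty$ and, for large $k$, the previous trial step $s_{l_k} := t_{l_k}/\beta = \beta_0\beta^{k_{l_k}-1}\to 0$ \emph{fails} the Armijo test by minimality of $k_{l_k}$.

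Next I would pass to the limit in this failed test. I would first bound $(v_{l_k})$: from the approximate-solution inequality and Cauchy--Schwarz one gets $\tfrac12\|v_{l_k}\|^2 \le -\max_i\nabla F_i(x_{l_k})v_{l_k} \le \big(\max_i\|\nabla F_i(x_{l_k})\|\big)\|v_{l_k}\|$, and the right factor stays bounded because $x_{l_k}\to\bar x$; hence $(v_{l_k})$ is bounded and, after relabeling, $v_{l_k}\to\bar v \in \ker(DH(\bar x))$ with $\max_i\nabla F_i(\bar x)\bar v \le \gamma\alpha(\bar x) < 0$. Because the test fails, for each $k$ some component $i_k$ violates it, and since there are finitely many components one index $i^*$ recurs infinitely often; along that subsequence the failed inequality reads
\begin{equation*}
	\frac{F_{i^*}(\pi(x_{l_k}+s_{l_k}v_{l_k})) - F_{i^*}(x_{l_k})}{s_{l_k}} \ge \sigma\,\nabla F_{i^*}(x_{l_k})v_{l_k}.
\end{equation*}
Writing $\pi(x_{l_k}+s_{l_k}v_{l_k}) = x_{l_k} + s_{l_k}w_{l_k}$ with $w_{l_k} := s_{l_k}^{-1}\big(\pi(x_{l_k}+s_{l_k}v_{l_k})-x_{l_k}\big)$ and applying Lemma~\ref{lem:diffQuot} to the $C^2$ function $F_{i^*}$, the left-hand side tends to $\nabla F_{i^*}(\bar x)\bar v$ provided $w_{l_k}\to\bar v$; taking the limit then gives $(1-\sigma)\,\nabla F_{i^*}(\bar x)\bar v \ge 0$, contradicting $\nabla F_{i^*}(\bar x)\bar v \le \gamma\alpha(\bar x) < 0$ since $\sigma\in(0,1)$.

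The main obstacle is the limit $w_{l_k}\to\bar v$, which is the moving-base-point generalization of the retraction identity used in Lemma~\ref{lem:stepSizeExists}, where the base point was held fixed. I would prove it using that $\pi$ is $C^1$ on a neighborhood of $\mathcal{M}$ (Definition~\ref{def:projection}) and $\pi(x_{l_k}) = x_{l_k}$: by the integral mean-value form,
\begin{equation*}
	w_{l_k} = \frac{\pi(x_{l_k}+s_{l_k}v_{l_k}) - \pi(x_{l_k})}{s_{l_k}} = \int_0^1 D\pi(x_{l_k}+\theta s_{l_k}v_{l_k})\,v_{l_k}\,d\theta \longrightarrow D\pi(\bar x)\bar v,
\end{equation*}
since $x_{l_k}+\theta s_{l_k}v_{l_k}\to\bar x$ uniformly in $\theta$, $v_{l_k}\to\bar v$, and $D\pi$ is continuous; finally $D\pi(\bar x)\bar v = \bar v$ because $\bar v\in T_{\bar x}\mathcal{M}$ and $DR_\pi(\bar x,\cdot)(0) = \mathrm{id}_{\ker DH(\bar x)}$ by Definition~\ref{def:projection}. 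This closes the argument.
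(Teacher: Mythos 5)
Your proof is correct, and its overall skeleton (monotone componentwise decrease, $t_l\,\alpha(x_l)\to 0$, extraction of a convergent subsequence of directions, contradiction from a rejected Armijo step) parallels the paper's. The genuine difference lies in how the rejected Armijo inequality survives the limit. The paper never confronts a difference quotient with a simultaneously moving base point and shrinking step: it splits into the cases $\limsup_l t_l > 0$ and $\lim_l t_l = 0$, and in the latter it \emph{fixes} the trial step $\beta_0\beta^q$, first lets the subsequence index tend to infinity (which requires only continuity of $F$, $\pi$ and $\nabla F_i$), and only afterwards lets $q\to\infty$, invoking Lemma~\ref{lem:stepSizeExists} at the fixed limit point $\bar x$. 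You instead work with the last rejected step $s_{l_k}=t_{l_k}/\beta\to 0$ directly, and supply the missing ingredient — the moving-base-point limit $\bigl(\pi(x_{l_k}+s_{l_k}v_{l_k})-x_{l_k}\bigr)/s_{l_k}\to\bar v$ — via the integral mean-value formula, using that $\pi$ is $C^1$ on a neighborhood of $\mathcal{M}$ and that $D\pi(\bar x)$ restricts to the identity on $\ker DH(\bar x)$; Lemma~\ref{lem:diffQuot} then closes the argument in a single limit. Your route also merges the paper's two cases, since the contradiction hypothesis $\alpha(\bar x)<0$ together with $t_l\alpha(x_l)\to 0$ and continuity of $\alpha$ forces $t_{l_k}\to 0$ along the subsequence; this is arguably tidier than the paper's Case 1, which is phrased for the full sequence. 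The trade-off is generality: the paper's iterated-limit trick uses only the retraction axioms of Definition~\ref{def:projection} at points of $\mathcal{M}$, which is precisely what allows the authors to remark in Section~\ref{sec:Numerical_Results} that $\pi$ may be replaced by any other retraction without touching the convergence theory; your argument additionally exploits the $C^1$-smoothness of $\pi$ as a map on an open subset of $\R^n$ — available here by \cite{AM2012}, but not an axiom of a general retraction — so it would need reworking under such a replacement, in exchange for a more direct, single-limit proof in the concrete setting.
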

\begin{proof}
	A sequence generated by Algorithm \ref{algo:eq} can only be finite if $\alpha(x_l) = 0$, which means $x_l$ is Pareto critical. Let $(x_l)_l$ from now on be infinite, so no element of the sequence is Pareto critical. \\
	Let $\bar{x} \in \mathcal{M}$ be an accumulation point of $(x_l)_l$. By construction of Algorithm \ref{algo:eq}, each component of $(F(x_l))_l$ is monotonically decreasing. Therefore, since $F$ is continuous, we have $\lim\limits_{l \rightarrow \infty} F(x_l) = F(\bar{x})$. By our choice of the descent direction and the step length \eqref{eq:stepSize} we have
	\begin{equation*}
		F(x_{l+1}) - F(x_l) < \sigma t_l DF(x) v_l < 0 \quad \forall l \in \mathbb{N}.
	\end{equation*}
	It follows that
	\begin{equation*}
		\lim\limits_{l \rightarrow \infty} t_l DF(x_l) v_l = 0.
	\end{equation*}
	Let $(l_s)_s \in \mathbb{N}$ be a sequence of indices so that $\lim\limits_{s \rightarrow \infty} x_{l_s} = \bar{x}$. We have to consider the following two possibilities:
	\begin{enumerate}
		\item $\underset{l \rightarrow \infty}{\textrm{limsup }} t_l > 0$
		\item $\lim\limits_{l \rightarrow \infty} t_l = 0$
	\end{enumerate}
	\textbf{Case 1:} It follows that
	\begin{equation*}
		\lim\limits_{l \rightarrow \infty} DF(x_l) v_l = 0,
	\end{equation*}
	and particularly
	\begin{equation*}
		\lim\limits_{l \rightarrow \infty} \max\limits_{i} \nabla F_i(x_l) v_l = 0.
	\end{equation*}
	Since $v_l$ is an approximate solution for some $\gamma \in (0,1]$, we have
	\begin{equation*}
		\max\limits_{i} \nabla F_i(x_l) v_l + \frac{1}{2} \| v_l \|^2 \leq \gamma \alpha(x_l) < 0 \quad \forall l \in \mathbb{N},
	\end{equation*}
	and it follows that
	\begin{equation*}
		\lim\limits_{l \rightarrow \infty} \left( \max\limits_{i} \nabla F_i(x_l) v_l + \frac{1}{2} \| v_l \|^2 \right) = 0.
	\end{equation*}
	Since $\alpha$ is continuous, this means
	\begin{equation*}
		0 = \lim\limits_{s \rightarrow \infty} \alpha(x_{l_s}) = \alpha(\bar{x}),
	\end{equation*}
	and $\bar{x}$ is Pareto critical.\\
	\textbf{Case 2:} It is easy to see that the sequence $(v_{l_s})_s$ of approximate solutions is bounded. Therefore, it is contained in a compact set and possesses an accumulation point $\bar{v}$. Let $(l_u)_u \in \mathbb{N}$ be a subsequence of $(l_s)_s$ with $\lim\limits_{u \rightarrow \infty} v_{l_u} = \bar{v}.$
	Since all $v_{l_u}$ are approximate solutions to \eqref{SPe}, we have
	\begin{equation*}
		\max_i \nabla F_i(x_{l_u}) v_{l_u} \leq \max_i \nabla F_i(x_{l_u}) v_{l_u} + \frac{1}{2} \| v_{l_u} \|^2 \leq \gamma \alpha(x_{l_u}) < 0.
	\end{equation*}
	Letting $u \rightarrow \infty$ and considering the continuity of $\alpha$, we thus have
	\begin{equation}
		\max_i \nabla F_i(\bar{x}) \bar{v} \leq \max_i \nabla F_i(\bar{x}) \bar{v} + \frac{1}{2} \| \bar{v} \|^2 \leq \gamma \alpha(\bar{x}) \leq 0. \label{eq:thmConvEq1}
	\end{equation}
	Since $\lim\limits_{l \rightarrow \infty} t_l = 0$, for all $q \in \mathbb{N}$ there exists some $N \in \mathbb{N}$ such that
	\begin{equation*}
		t_{l_u} < \beta_0 \beta^q \quad \forall u > N,
	\end{equation*} 
	and therefore -- as a consequence of the definition of the step length \eqref{eq:stepSize} -- we have
	\begin{equation*}
		F(\pi(x_{l_u} + \beta_0 \beta^q v_{l_u})) \nless F(x_{l_u}) + \sigma \beta_0 \beta^q DF(x_{l_u}) v_{l_u} \quad \forall u > N.
	\end{equation*}
	Thus, there has to be some $i \in \{1,...,m\}$ such that
	\begin{equation*}
		F_i(\pi(x_{l_u} + \beta_0 \beta^q v_{l_u})) \geq F_i(x_{l_u}) + \sigma \beta_0 \beta^q \nabla F_i(x_{l_u}) v_{l_u}
	\end{equation*}
	holds for infinitely many $u > N$. Letting $u \rightarrow \infty$ we get
	\begin{equation*}
		F_i(\pi(\bar{x} + \beta_0 \beta^q \bar{v})) \geq F_i(\bar{x}) + \sigma \beta_0 \beta^q \nabla F_i(\bar{x}) \bar{v},
	\end{equation*}
	and therefore
	\begin{equation}
		\frac{F_i(\pi(\bar{x} + \beta_0 \beta^q \bar{v})) - F_i(\bar{x})}{\beta_0 \beta^q} \geq \sigma \nabla F_i(\bar{x}) \bar{v}. \label{eq:thmConvEq2}
	\end{equation}
	There has to be at least one $i \in \{1,...,m\}$  such that inequality \eqref{eq:thmConvEq2} holds for infinitely many $q \in \mathbb{N}$. By Lemma \ref{lem:stepSizeExists} and letting $q \rightarrow \infty$, we have
	\begin{equation*}
		\nabla F_i(\bar{x}) \bar{v} \geq \sigma \nabla F_i(\bar{x}) \bar{v}
	\end{equation*}
	and thus, $\nabla F_i(\bar{x}) \bar{v} \geq 0$. Combining this with inequality \eqref{eq:thmConvEq1}, we get
	\begin{equation*}
		\max_i \nabla F_i(\bar{x}) \bar{v} = 0.
	\end{equation*}
	Therefore, we have
	\begin{equation*}
		\gamma \alpha(\bar{x}) \geq \max_i \nabla F_i(\bar{x}) \bar{v} + \frac{1}{2} \| \bar{v} \|^2 = 0,
	\end{equation*}
	and $\alpha(\bar{x}) = 0$ by which $\bar{x}$ is Pareto critical.
\end{proof}

To summarize, the above result yields the same convergence as in the unconstrained case. 

\begin{remark} \label{rem:modConvEq}
	Observe that the proof of Theorem \ref{thm:convEq} also works for slightly more general sequences than the ones generated by Algorithm \ref{algo:eq}. (This will be used in a later proof and holds mainly due to the fact that we consider subsequences in the proof of Theorem~\ref{thm:convEq}.) \\
	Let $(x_l)_l \in \mathcal{M}$, $K \subseteq \mathbb{N}$ with $| K | = \infty$ such that 
	\begin{itemize}
		\item $(F_i(x_l))_l$ is monotonically decreasing for all $i \in \{1,...,m\}$ and
		\item the step from $x_k$ to $x_{k+1}$ was realized by an iteration of Algorithm \ref{algo:eq} for all $k \in K$.
	\end{itemize}
	Let $\bar{x} \in \mathcal{M}$ such that there exists a sequence of indices $(l_s)_s \in K$ with $\lim_{s \rightarrow \infty} x_{l_s} = \bar{x}$. Then $\bar{x}$ is Pareto critical.
\end{remark}


\subsection{Equality and inequality constraints}
\label{subsec:Descent_EC_IC}
In order to incorporate inequality constraints (ICs), we consider two different active set strategies in which ICs are either \emph{active} (and hence treated as ECs) or otherwise neglected. An inequality constraint will be considered active if its value is close to zero. The first active set strategy is based on \cite[Section 8]{FS2000}. There, the active ICs are treated as additional components of the objective function, so the values of the active ICs decrease along the resulting descent direction. This means that the descent direction points into the feasible set with respect to the ICs and the sequence moves away from the boundary. The second active set strategy treats active ICs as additional ECs. Thus, the active ICs stay active and the sequence stays on the boundary of the feasible set with respect to the ICs. Before we can describe the two strategies rigorously, we first have to define when an inequality is active. Furthermore, we have to extend the notion of Pareto criticality to ECs and ICs.

As in the equality constrained case, we have to make a few basic assumptions about the objective function and the constraints.
\begin{itemize}
	\item[] \textbf{(A1):} $F: \R^n \rightarrow \R^m$ is $C^2$.
	\item[] \textbf{(A2):} $H: \R^n \rightarrow \R^{m_H}$ is $C^2$ with regular value $0$.
	\item[] \textbf{(A3):} $G: \R^n \rightarrow \R^{m_G}$ is $C^2$.
\end{itemize}

By (A2), $\mathcal{M} := H^{-1}(\{0\})$ has the same manifold structure as in the equality constrained case. For the remainder of this section we consider MOPs of the form
\begin{equation} \label{P}
	\begin{aligned}
		& \underset{x \in \mathcal{M}}{\text{min}} &&  F(x), \\
		& \text{s.t.} &&  G_i(x) \leq 0 \quad \forall i \in \{1,...,m_G\},
	\end{aligned} \tag{P}
\end{equation}
which is equivalent to \eqref{MOP}. The set of feasible points is $\mathcal{N} := \mathcal{M} \cap G^{-1}((\R^{\leq 0})^{m_G})$ and can be thought of as a manifold with a boundary (and corners). In order to use an active set strategy we first have to define the active set.

\begin{definition}
	For $\epsilon \geq 0$ and $x \in \mathcal{M}$ set
	\begin{equation*}
		I_\epsilon(x) := \{i \in \{1,...,m_G\}: G_i(x) \geq -\epsilon \}.
	\end{equation*}
	The component functions of $G$ with indices in $I_\epsilon(x)$ are called \emph{active} and $I_\epsilon(x)$ is called the \emph{active set in $x$ (with tolerance $\epsilon$)}. A boundary $G_i^{-1}(\{0\})$ is called \emph{active} if $i \in I_\epsilon(x)$.
\end{definition}

In other words, the $i$-th inequality being active at $x \in \mathcal{N}$ means that $x$ is close to the $i$-th boundary $G_i^{-1}(\{0\})$. Before defining Pareto criticality for problem \eqref{P}, we need an additional assumption on $G$ and $H$. Let
\begin{equation*}
	L(x) := \{ v \in \text{ker}(DH(x)) : \nabla G_i(x) v \leq 0 \quad \forall i \in I_0(x) \}
\end{equation*}
be the \emph{linearized cone at} $x$.
\begin{itemize}
	\item[] \textbf{(A4):} The interior of $L(x)$ is non-empty for all $x \in \mathcal{N}$, i.e.
	\begin{equation*}
		L^\circ(x) = \{ v \in \text{ker}(DH(x)) : \nabla G_i(x) v < 0 \quad \forall i \in I_0(x) \} \neq \emptyset.
	\end{equation*}
\end{itemize}
For example, this assumption eliminates the case where $G^{-1}((\R^{\leq 0})^{m_G})$ has an empty interior. We will use the following implication of (A4):

\begin{lemma} \label{lem:A4equiv}
	(A4) holds iff for all $\delta > 0$, $x \in \mathcal{N}$ and $v \in \text{ker}(DH(x))$ with
	\begin{equation*}
		\nabla G_i(x) v \leq 0 \quad \forall i \in I_0(x),
	\end{equation*}
	there is some $w \in U_{\delta}(v) \cap \text{ker}(DH(x))$ with 
	\begin{equation*}
		\nabla G_i(x) w < 0 \quad \forall i \in I_0(x).
	\end{equation*}
\end{lemma}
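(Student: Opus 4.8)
The plan is to prove the two implications separately, with essentially all of the work in the forward direction, since the converse is a one-line specialization.

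For the backward implication I would observe that the right-hand condition immediately forces (A4): fix any $x \in \mathcal{N}$ and apply the hypothesis to the particular vector $v = 0$, which lies in $\text{ker}(DH(x))$ and trivially satisfies $\nabla G_i(x)\, 0 = 0 \leq 0$ for all $i \in I_0(x)$, together with, say, $\delta = 1$. This produces some $w \in U_1(0) \cap \text{ker}(DH(x))$ with $\nabla G_i(x) w < 0$ for all $i \in I_0(x)$, i.e.~$w \in L^\circ(x)$, so $L^\circ(x) \neq \emptyset$ and (A4) holds.

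For the forward implication, assume (A4) and fix $\delta > 0$, $x \in \mathcal{N}$ and $v \in \text{ker}(DH(x))$ with $\nabla G_i(x) v \leq 0$ for all $i \in I_0(x)$. By (A4) I may pick some $v^* \in L^\circ(x)$, so that $\nabla G_i(x) v^* < 0$ for every $i \in I_0(x)$. The key idea is to perturb $v$ slightly towards this interior point by a convex combination: for $\lambda \in (0,1)$ put $w_\lambda := (1-\lambda) v + \lambda v^*$. Since $\text{ker}(DH(x))$ is a linear subspace, each $w_\lambda$ again lies in $\text{ker}(DH(x))$, and by linearity of $\nabla G_i(x)$ one has, for every $i \in I_0(x)$,
\begin{equation*}
	\nabla G_i(x) w_\lambda = (1-\lambda)\, \nabla G_i(x) v + \lambda\, \nabla G_i(x) v^* \leq \lambda\, \nabla G_i(x) v^* < 0,
\end{equation*}
using $\nabla G_i(x) v \leq 0$ and $\lambda \in (0,1)$. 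Thus every $w_\lambda$ already satisfies the required strict inequalities simultaneously for all active indices.

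It then only remains to make $w_\lambda$ close to $v$. Since $\| w_\lambda - v \| = \lambda \| v^* - v \|$, choosing $\lambda \in (0,1)$ small enough — for instance any $\lambda < \delta / (1 + \| v^* - v \|)$ — yields $\| w_\lambda - v \| < \delta$, so that $w := w_\lambda \in U_\delta(v) \cap \text{ker}(DH(x))$ has all the desired properties. I do not expect a genuine obstacle here; the only point that needs care is that the finitely many strict inequalities must be preserved \emph{at once}, which is exactly what the convex-combination estimate above guarantees uniformly over $i \in I_0(x)$, and that the perturbation stays inside the subspace $\text{ker}(DH(x))$, which holds because it is closed under convex combinations.
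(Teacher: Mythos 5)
Your proof is correct, but it takes a genuinely different route from the paper's. The paper argues the substantive direction by contraposition: assuming the right-hand condition fails for some $v \in L(x)$ and $\delta > 0$, it deduces that every $\epsilon \in U_\delta(0) \cap \text{ker}(DH(x))$ satisfies $\nabla G_j(x)\epsilon \geq 0$ for some $j \in I_0(x)$, and then uses positive homogeneity (rescaling any $w \in \text{ker}(DH(x))$ into the $\delta$-ball, which does not change the signs of the $\nabla G_j(x)w$) to conclude $L^\circ(x) = \emptyset$, i.e.~that (A4) fails. You instead argue directly: assuming (A4), you pick an interior point $v^* \in L^\circ(x)$ and perturb $v$ by the convex combination $w_\lambda = (1-\lambda)v + \lambda v^*$, checking that the strict inequalities hold for all active indices at once and that $\|w_\lambda - v\| = \lambda\|v^* - v\|$ can be made smaller than $\delta$. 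Your construction is the standard convexity argument (segments from a point of a convex cone to an interior point immediately enter the interior) and has the advantage of being explicit and constructive, exhibiting the perturbation $w$ rather than inferring its existence; the paper's contrapositive argument buys a slightly shorter derivation by exploiting the cone structure to globalize a local obstruction. Your backward direction (specializing to $v = 0$, $\delta = 1$) is essentially the same observation the paper makes, just stated directly rather than contrapositively. Both proofs are complete and elementary; yours is, if anything, the cleaner of the two.
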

\begin{proof}
	Assume that there is some $v \in L(x)$ and $\delta > 0$ such that 
	\begin{equation*}
		\nexists \epsilon \in U_\delta(0) \cap \text{ker}(DH(x)): \nabla G_i(x) (v+\epsilon) < 0 \quad \forall i \in I_0(x).
	\end{equation*}
	It follows that
	\begin{equation*}
		\forall \epsilon \in U_\delta(0) \cap \text{ker}(DH(x)) \hspace{3pt} \exists j \in I_0(x): \nabla G_{j}(x) \epsilon \geq 0,
	\end{equation*}
	and thus,
	\begin{equation*}
		\forall w \in \text{ker}(DH(x)) \hspace{3pt} \exists j \in I_0(x): \nabla G_{j}(x) w \geq 0.
	\end{equation*}
	So we have $L^\circ(x) = \emptyset$. On the other hand, the right-hand side of the equivalence stated in this lemma obviously can not hold if $L^\circ(x) = \emptyset$, which completes the proof.
\end{proof}

We will now define Pareto criticality for MOPs with ECs and ICs. Similar to scalar-valued optimization, Pareto critical points are feasible points for which there exists no descent direction pointing into or alongside the feasible set $\mathcal{N}$.

\begin{definition}
	A point $x \in \mathcal{N}$ is \emph{Pareto critical}, if
	\begin{equation*}
		\nexists v \in \text{ker}(DH(x)): DF(x) v < 0 \quad \textrm{and} \quad \nabla G_i(x) v \leq 0 \quad \forall i \in I_0(x).
	\end{equation*}
\end{definition}

By Lemma \ref{lem:A4equiv}, the condition in the last definition is equivalent to
\begin{equation*}
	\nexists v \in \text{ker}(DH(x)): DF(x) v < 0 \quad \textrm{and} \quad \nabla G_i(x) v < 0 \quad \forall i \in I_0(x).
\end{equation*}
We will show in Lemma \ref{lem:stepSizeS1Exists} that Pareto criticality is indeed a first-order necessary condition for Pareto optimality for the MOP \eqref{P}. We will now look at the first active set strategy.

\subsubsection{Strategy 1 -- Active inequalities as additional objectives}
\label{subsubsec:Descent_EC_IC_S1}
If we consider the active constraints in the subproblem \eqref{SPe} as additional components of the objective function $F$, then we obtain the following subproblem. For given $x \in \mathcal{N}$ and $\epsilon > 0$:
\begin{equation} \label{SP1}
	\begin{aligned}
		& \alpha_1(x,\epsilon) := && \underset{(v,\beta) \in \R^{n+1}}{\text{min}} &&  \beta + \frac{1}{2} \| v \|^2  && \\
		& && \text{s.t.} &&  \nabla F_i(x) v \leq \beta &&\forall i \in \{1,...,m\}, \\
		& && && \nabla G_l(x) v \leq \beta &&\forall l \in I_\epsilon(x), \\
		& && && \nabla H_j(x) v = 0 &&\forall j \in \{1,...,m_H\}.
	\end{aligned} \tag{SP1}
\end{equation}

Due to the incorporated active set strategy, the solution $v$ and the optimal value $\alpha_1(x,\epsilon)$ (which now additionally depends on the tolerance $\epsilon$) of this subproblem do in general not depend continuously on $x$.

\begin{remark} \label{rem:propertiesAlphaS1}
	The following two results for \eqref{SPe} can be translated to \eqref{SP1}.
	\begin{enumerate}
		\item By Lemma \ref{lem:critical_alpha0} and (A4) we obtain that a point $x$ is Pareto critical iff $\alpha_1(x,0) = 0$. Furthermore, a solution of \eqref{SP1} is a descent direction of $F$ if the corresponding optimal value $\alpha_1$ is negative.
		\item Using Lemma \ref{lem:alphaUnique_convexComb} with the objective function extended by the active inequalities, we obtain uniqueness of the solution of \eqref{SP1}. 
	\end{enumerate}
\end{remark}

Using \eqref{SP1} instead of \eqref{SPe} and modifying the step length yields Algorithm \ref{algo:S1} as a descent method for the MOP \eqref{P}. Similar to the equality constrained case, we first have to calculate a feasible point to start our descent method.
\begin{algorithm}[t]
	\caption{(Descent method for equality and inequality constrained MOPs, Strategy 1)} \label{algo:S1}
	\begin{algorithmic}[1]
		\Require $x \in \R^n$, $\beta_0 > 0$, $\beta \in (0,1)$, $\epsilon > 0$, $\sigma \in (0,1)$.
		\State Compute some $x_0 \in \mathcal{N}$ with $\| x - x_0 \| = \min\{\| x - x_0 \| : x_0 \in \mathcal{N \}}$.
		\For{$l = 0$, $1$, ...}
			\State Identify the active set $I_\epsilon(x_l)$.
			\State Compute the solution $v_l$ of \eqref{SP1} at $x_l$ with $\epsilon$.
			\If{$x_l$ is Pareto critical (i.e.~$\alpha_1(x_l,\epsilon) = 0$)}
				\State Stop.
			\Else
				\State Compute the step length $t_l$ as in \eqref{eq:stepSize} for $x_l$ and $v_l$ with parameters $\beta_0$, $\beta$ and $\sigma$.
				\If{$\pi(x_l + t_l v_l) \notin \mathcal{N}$}
					\State Compute the smallest $k$ so that
						\begin{equation*}
							F(\pi(x_l + \beta_0 \beta^k v_l)) < F(x_l) + \sigma \beta_0 \beta^k DF(x_l) v_l
						\end{equation*}
					\hspace{37pt} and $\pi(x_l + \beta_0 \beta^k v_l) \in \mathcal{N}$.
					\State Choose some $t_l \in [\beta_0 \beta^k, \beta_0 \beta^{k_l}]$, so that $\pi(x_l + t_l v_l) \in \mathcal{N}$ and the Armijo 
					\Statex \hspace{37pt} condition hold.
				\EndIf
				\State Set $x_{l+1} = \pi(x_l + t_l v_l)$.
			\EndIf
		\EndFor
	\end{algorithmic}
\end{algorithm}
We have modified the step length such that in addition to the Armijo condition, the algorithm verifies whether the resulting point is feasible with respect to the inequality constraints. In order to show that this algorithm is well defined, we have to ensure that this step length always exists.

\begin{lemma} \label{lem:stepSizeS1Exists}
	Let $x \in \mathcal{N}$, $\beta_0 > 0$, $\beta \in (0,1)$, $\sigma \in (0,1)$, $\epsilon \geq 0$ and $v \in \text{ker}(DH(x))$ with
	\begin{equation*}
		DF(x) v < 0 \quad \textrm{and} \quad \nabla G_i(x) v < 0 \quad \forall i \in I_\epsilon(x).
	\end{equation*}
	Then there exists some $K \in \mathbb{N}$ such that
	\begin{equation*}
		F(\pi(x + \beta_0 \beta^k v)) < F(x) + \sigma \beta_0 \beta^k DF(x)v \quad \textrm{and} \quad \pi(x + \beta_0 \beta^k v) \in \mathcal{N} \quad \forall k > K.
	\end{equation*}
	Particularly, Pareto criticality is a first-order necessary condition for local Pareto optimality.
\end{lemma}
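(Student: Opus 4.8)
The plan is to split the claimed conclusion into its two parts — the Armijo inequality and the feasibility condition $\pi(x + \beta_0 \beta^k v) \in \mathcal{N}$ — establish each for all sufficiently large $k$, and then intersect the finitely many resulting index thresholds. For the Armijo inequality I would simply invoke Lemma \ref{lem:stepSizeExists}: the hypotheses $v \in \text{ker}(DH(x))$ and $DF(x) v < 0$ are exactly what that lemma requires, so it yields some $K_0$ with $F(\pi(x + \beta_0 \beta^k v)) < F(x) + \sigma \beta_0 \beta^k DF(x) v$ for all $k > K_0$.

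For feasibility, the key observation is that $\pi$ maps into $\mathcal{M}$ by construction, so $\pi(x + \beta_0 \beta^k v) \in \mathcal{M}$ holds automatically; it remains only to verify $G_i(\pi(x + \beta_0 \beta^k v)) \leq 0$ for every $i \in \{1,\dots,m_G\}$. I would treat the inactive and active indices separately. For an inactive index $i \notin I_\epsilon(x)$ we have $G_i(x) < -\epsilon \leq 0$ strictly; since $\pi(x + \beta_0 \beta^k v) \to \pi(x) = x$ and $G_i$ is continuous, $G_i(\pi(x + \beta_0 \beta^k v)) \to G_i(x) < 0$, so the value stays negative for all large $k$. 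For an active index $i \in I_\epsilon(x)$, I would apply the same difference-quotient argument underlying Lemma \ref{lem:stepSizeExists}, now to the $C^2$ function $G_i$ in place of a component of $F$ (legitimate by (A3)), to obtain
\begin{equation*}
	\lim_{k \to \infty} \frac{G_i(\pi(x + \beta_0 \beta^k v)) - G_i(x)}{\beta_0 \beta^k} = \nabla G_i(x) v < 0.
\end{equation*}
Since this limit is strictly negative, $G_i(\pi(x + \beta_0 \beta^k v)) < G_i(x) \leq 0$ for all large $k$ (using $x \in \mathcal{N}$, hence $G_i(x) \leq 0$). Collecting $K_0$ and the thresholds from each of the finitely many constraints and taking the maximum produces the desired $K$.

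The final assertion — that Pareto criticality is necessary for local Pareto optimality — I would prove by contraposition. If $x \in \mathcal{N}$ is not Pareto critical, then, using the equivalent formulation following Lemma \ref{lem:A4equiv}, there is a $v \in \text{ker}(DH(x))$ with $DF(x) v < 0$ and $\nabla G_i(x) v < 0$ for all $i \in I_0(x)$. Applying the first part of this lemma with $\epsilon = 0$ gives, for all large $k$, a feasible point $y_k := \pi(x + \beta_0 \beta^k v) \in \mathcal{N}$ with $F(y_k) < F(x) + \sigma \beta_0 \beta^k DF(x) v < F(x)$, so $F(y_k)$ dominates $F(x)$. As $y_k \to x$, every neighborhood of $x$ contains such a feasible dominating point, whence $x$ is not locally Pareto optimal.

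The main obstacle I anticipate is the active-constraint case of feasibility: one must ensure the corrector (retraction) step does not push the iterate back across an active boundary. This is exactly why the strict inequality $\nabla G_i(x) v < 0$ is assumed rather than $\leq 0$ — it guarantees a strictly negative first-order decrease of $G_i$, which combined with $G_i(x) \leq 0$ keeps the retracted point feasible. The only mild technical point is transferring the difference-quotient limit from $F$ (as in Lemma \ref{lem:stepSizeExists}) to $G_i$, which is routine, since that limit depends only on the retraction property $DR_\pi(x,\cdot)(0) = \text{id}$ and on $G$ being $C^2$, not on the particular function being composed.
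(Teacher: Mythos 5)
Your proof is correct and takes essentially the same route as the paper: the Armijo inequality is delegated to Lemma \ref{lem:stepSizeExists}, and feasibility rests on continuity of $G_i \circ \pi$ for inactive indices together with the difference-quotient limit of Lemma \ref{lem:stepSizeExists} applied to $G_i$ for active ones --- the paper merely packages these same ingredients as a contradiction argument (a violating index $j$ is forced into $I_\epsilon(x)$ by continuity, whence $\nabla G_j(x) v \geq 0$) instead of your direct active/inactive case split. A minor bonus on your side: you explicitly prove the ``Particularly'' assertion via Lemma \ref{lem:A4equiv} and the case $\epsilon = 0$, which the paper's proof leaves implicit.
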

\begin{proof}
	Lemma \ref{lem:stepSizeExists} shows the existence of some $K \in \mathbb{N}$ for which the first condition holds for all $k > K$. We assume that the second condition is violated for infinitely many $k \in \mathbb{N}$. Then we have $G(x) \leq 0$ and 
	\begin{equation}
		G_j(\pi(x + \beta_0 \beta^k v)) > 0 \label{eq:LemStepSizeS1ExistsEq1}
	\end{equation}
	for arbitrarily large $k \in \mathbb{N}$ and some $j \in \{1,...,m_G\}$. Since $G$ and $\pi$ are continuous, we get $G_j(x) = 0$ and $j \in I_\epsilon(x)$. Using inequality \eqref{eq:LemStepSizeS1ExistsEq1} combined with Lemma \ref{lem:stepSizeExists} with $F = G_j$, we get
	\begin{equation*}
		0 \leq \lim\limits_{k \rightarrow \infty} \frac{G_j(\pi(x + \beta_0 \beta^k v)) - G_j(x)}{\beta_0 \beta^k} = \nabla G_j(x) v.
	\end{equation*}
	This contradicts our prerequisites.
\end{proof}

Since the step length always exists, we know that Algorithm \ref{algo:S1} generates a sequence $(x_l)_l \in \mathcal{N}$ with $F(x_{l+1}) < F(x_l)$. We will now prove a convergence result of this sequence (Theorem \ref{thm:convS1}). The proof is essentially along the lines of \cite[Section 8]{FS2000} and is based on the observation that the step lengths in the algorithm can not become arbitrarily small if $\alpha_1(x,\epsilon) < \rho < 0$ holds (for some $\rho < 0$) for all $x$ in a compact set. To prove this, we have to show the existence of a positive lower bound for step lengths violating the two requirements in Step 10 of Algorithm \ref{algo:S1}. To this end, we first need the following technical result.

\begin{lemma} \label{lem:lipIneq}
	Let $f: \R^n \rightarrow \R$ be a continuously differentiable function so that $\nabla f : \R^n \rightarrow \R^n$ is Lipschitz continuous with a constant $L > 0$. Then for all $x,y \in \R^n$ we have
	\begin{equation*}
		f(y) \leq f(x) + \nabla f(x) (y-x) + \frac{1}{2} L \| y-x \|^2.
	\end{equation*}
\end{lemma}
\begin{proof}
	The proof follows by considering 
	\begin{equation*}
		f(y) = f(x) + \nabla f(x) (y - x)  + \int_0^1 (\nabla f(x + t(y-x)) + \nabla f(x))(y - x) dt
	\end{equation*}
	and by using the Lipschitz continuity of $\nabla f$ to get an upper bound for the integral.
\end{proof}

The next lemma will show that there exists a lower bound for the step lengths that violate the first requirement in Step 10 of Algorithm \ref{algo:S1}.

\begin{lemma} \label{lem:stepSizeS1Bound1}
	Let $\mathcal{K} \subseteq \R^n$ be compact, $\mathcal{V} \subseteq \R^n$ be a closed sphere around $0$, $\delta > 0$ and $\sigma \in (0,1)$. Then there is some $T > 0$ so that
	\begin{equation}
		F(\pi(x + tv)) \leq F(x) + t \sigma DF(x) v \label{eq:LemStepSizeS1Bound1Eq1}
	\end{equation}
	holds for all $x \in \mathcal{K} \cap \mathcal{N}$, $v \in \mathcal{V}$ with $DF(x) v \leq 0$ and
	\begin{equation*}
		t \in \left[ 0, \min \left( T, -\frac{2 (1 - \sigma) \max_i (\nabla F_i(x)v) + \delta}{L^3 \| v \|^2} \right) \right],
	\end{equation*}
	where $L$ is a Lipschitz constant on $\mathcal{K}$ for $\pi$ and $\nabla F_i$, $i \in \{1,...,m\}$.
\end{lemma}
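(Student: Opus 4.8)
The plan is to prove the vector inequality componentwise: it suffices to find $T>0$ such that for every $i\in\{1,\dots,m\}$ and every admissible $x,v,t$ one has $F_i(\pi(x+tv)) \le F_i(x) + t\sigma\nabla F_i(x)v$. The starting point is Lemma~\ref{lem:lipIneq} applied to $f=F_i$ with $y=\pi(x+tv)$, which yields
\begin{equation*}
	F_i(\pi(x+tv)) \le F_i(x) + \nabla F_i(x)\big(\pi(x+tv)-x\big) + \tfrac12 L\,\|\pi(x+tv)-x\|^2,
\end{equation*}
where I use that $L$ dominates the Lipschitz constants of all $\nabla F_i$ on $\mathcal{K}$. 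Since $x\in\mathcal{N}\subseteq\mathcal{M}$ gives $\pi(x)=x$ and $\pi$ is $L$-Lipschitz on $\mathcal{K}$, I can bound $\|\pi(x+tv)-x\|=\|\pi(x+tv)-\pi(x)\|\le L t\|v\|$, so the quadratic term is at most $\tfrac12 L^3 t^2\|v\|^2$; this is exactly where the factor $L^3$ in the stated bound originates.

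The decisive step is to replace the linear term $\nabla F_i(x)(\pi(x+tv)-x)$ by $t\,\nabla F_i(x)v$ up to a uniformly controllable error. I would write $\pi(x+tv)-x = tv + r(x,t,v)$ with residual $r(x,t,v):=\pi(x+tv)-x-tv$ and estimate $r$ uniformly. Using that $\pi$ is $C^1$ near $\mathcal{M}$, the fundamental theorem of calculus gives $\pi(x+tv)-x = t\int_0^1 D\pi(x+stv)\,v\,ds$, and the retraction property $DR_\pi(x,\cdot)(0)=\text{id}_{\ker(DH(x))}$ from Definition~\ref{def:projection} translates into $D\pi(x)v=v$ for the tangent directions $v\in\ker(DH(x))$ produced by \eqref{SP1}. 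Hence $r(x,t,v)=t\int_0^1\big(D\pi(x+stv)-D\pi(x)\big)v\,ds$. Because $\mathcal{K}\cap\mathcal{N}$ and the closed ball $\mathcal{V}$ are compact, $x+stv$ stays in a fixed compact neighborhood of $\mathcal{M}$ on which $\pi$ is $C^1$ once $t$ is small, and there $D\pi$ is uniformly continuous; therefore $\|r(x,t,v)\|\le t\|v\|\,\omega(t)$ with a modulus $\omega(t)\to0$ independent of $x,v$. Choosing $T$ so small that $C\,\omega(T)\,R\le \delta/2$, where $R$ bounds $\|v\|$ on $\mathcal{V}$ and $C$ bounds $\|\nabla F_i\|$ on $\mathcal{K}$, gives $\nabla F_i(x)\,r(x,t,v)\le \tfrac{\delta}{2}\,t$ for all $t\le T$, uniformly in $i,x,v$.

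Combining the two estimates, for $t\le T$ I obtain
\begin{equation*}
	F_i(\pi(x+tv)) \le F_i(x) + t\,\nabla F_i(x)v + \tfrac{\delta}{2}\,t + \tfrac12 L^3 t^2\|v\|^2,
\end{equation*}
so the claim $F_i(\pi(x+tv))\le F_i(x)+t\sigma\nabla F_i(x)v$ follows as soon as $\tfrac{\delta}{2}t+\tfrac12 L^3 t^2\|v\|^2\le -t(1-\sigma)\nabla F_i(x)v$. Using the hypothesis $DF(x)v\le0$ I have $\nabla F_i(x)v\le \max_i\nabla F_i(x)v\le0$, so it suffices to verify the single $i$-independent inequality $\tfrac{\delta}{2}+\tfrac12 L^3 t\|v\|^2\le -(1-\sigma)\max_i\nabla F_i(x)v$, which upon solving for $t$ is precisely $t\le -\big(2(1-\sigma)\max_i(\nabla F_i(x)v)+\delta\big)/(L^3\|v\|^2)$; intersecting with $t\le T$ yields the stated interval. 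The one genuinely delicate point is the uniform residual estimate for $r$: everything else is algebraic rearrangement, but that step must combine the first-order retraction identity with the uniform continuity of $D\pi$ on a compact set, and it is there that the tangency $v\in\ker(DH(x))$ (hence $D\pi(x)v=v$) is essential.
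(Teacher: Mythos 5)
Your proof is correct and follows essentially the same route as the paper: apply Lemma~\ref{lem:lipIneq} to each $F_i$, use the Lipschitz bound $\|\pi(x+tv)-x\|\le Lt\|v\|$ to produce the $L^3$ factor, uniformly control the deviation of $\frac{\pi(x+tv)-x}{t}$ from $v$ over the compact sets, and solve the resulting inequality for $t$. The only difference is one of detail: where the paper asserts the uniform estimate $\sup_{x,v,t\in(0,T']}\big\|\frac{\pi(x+tv)-x}{t}-v\big\|<\delta'$ by compactness and continuity, you derive it explicitly via the fundamental theorem of calculus and the retraction identity $D\pi(x)v=v$, making visible the tangency requirement $v\in\ker(DH(x))$ that the paper's statement and proof use only implicitly.
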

\begin{proof}
	Since $\pi$ and all $\nabla F_i$ are continuously differentiable and $\mathcal{K}$ is compact, there is a (common) Lipschitz constant $L$ (cf. \cite[Proposition 2 and 3]{H2003}). Thus, by Lemma \ref{lem:lipIneq} we have
	\begin{equation*}
		\begin{aligned}
			F_i(\pi(x+tv)) &= F_i(x + (\pi(x+tv) - x)) \\
			&\leq F_i(x) + \nabla F_i(x) (\pi(x + tv) - x) + \frac{1}{2} L \| \pi(x+tv) - x \|^2.
		\end{aligned}
	\end{equation*}
	This means that \eqref{eq:LemStepSizeS1Bound1Eq1} holds if for all $i \in \{1,...,m\}$ we have
	\begin{equation*}
		F_i(x) + \nabla F_i(x) (\pi(x + tv) - x) + \frac{1}{2} L \| \pi(x+tv) - x \|^2 \leq F_i(x) + t \sigma \nabla F_i(x) v,
	\end{equation*}
	which is equivalent to
	\begin{equation}
		t  \nabla F_i(x) \left( \frac{\pi(x+tv)-x}{t} - \sigma v \right) \leq - \frac{1}{2} L \| \pi(x+tv) - x \|^2. \label{eq:LemStepSizeS1Bound1Eq2}
	\end{equation}
	Since $\pi$ is Lipschitz continuous we have
	\begin{equation*}
		- \frac{1}{2} L \| \pi(x+tv) - x \|^2 \geq - \frac{1}{2} L^3 t^2 \| v \|^2,
	\end{equation*}
	and \eqref{eq:LemStepSizeS1Bound1Eq2} holds if
	\begin{equation*}
		t \nabla F_i(x) \left( \frac{\pi(x+tv)-x}{t} - \sigma v \right) \leq - \frac{1}{2} L^3 t^2 \| v \|^2,
	\end{equation*}
	which is equivalent to
	\begin{equation}
		\begin{aligned}
			& t \leq -\frac{2 \nabla F_i(x) (\frac{\pi(x+tv)-x}{t} - \sigma v) }{L^3 \| v \|^2}  \\
			& = -\frac{2 (1 - \sigma) \nabla F_i(x)v + 2 \nabla F_i(x) (\frac{\pi(x+tv)-x}{t} - v) }{L^3 \| v \|^2} \label{eq:LemStepSizeS1Bound1Eq3}
		\end{aligned}
	\end{equation}
	for all $i \in \{1,...,m\}$. Since $\mathcal{K}$ and $\mathcal{V}$ are compact and $\lim\limits_{t \rightarrow 0} \frac{\pi(x+tv)-x}{t} = v$ (cf. the proof of Lemma \ref{lem:stepSizeExists}), we know by continuity that for all $\delta' > 0$, there exists some $T' > 0$ such that 
	\begin{equation*}
		\sup_{x \in \mathcal{K} \cap \mathcal{N},v \in \mathcal{V},t \in (0,T']} \Big\| \frac{\pi(x+tv)-x}{t} - v \Big\| < \delta'.
	\end{equation*}
	Since all $\| \nabla F_i \|$ are continuous on $\mathcal{K}$ and therefore bounded, by the Cauchy-Schwarz inequality there exists some $T > 0$ such that
	\begin{equation*}
		\left| 2 \nabla F_i(x) \left( \frac{\pi(x+tv)-x}{t} - v \right) \right| < \delta
	\end{equation*}
	for all $i \in \{1,...,m\}$, $x \in \mathcal{K} \cap \mathcal{N}$, $v \in \mathcal{V}$ and $t \in [0,T]$. Combining this with inequality \eqref{eq:LemStepSizeS1Bound1Eq3} completes the proof.
\end{proof}

The following lemma shows that there is a lower bound for the second requirement in Step 10 of Algorithm \ref{algo:S1}. This means that if we have a direction $v$ pointing inside the feasible set given by the active inequalities, then we can perform a step of a certain length in that direction without violating any inequalities.

\begin{lemma} \label{lem:stepSizeS1Bound2}
	Let $\mathcal{K} \subseteq \R^n$ be compact, $\mathcal{V} \subseteq \R^n$ be a closed sphere around $0$, $\delta > 0$ and $L$ be a Lipschitz constant of $G_i$ and $\nabla G_i$ on $\mathcal{K}$ for all $i$. Let $x \in \mathcal{K} \cap \mathcal{N}$, $v \in \mathcal{V}$, $\epsilon > 0$ and $\rho < 0$, such that
	\begin{equation*}
		\nabla G_i(x) v \leq \rho \quad \forall i \in I_\epsilon(x).
	\end{equation*}
	Then there exists some $T > 0$ such that
	\begin{equation*}
		\pi(x + tv) \in \mathcal{N} \quad \forall t \in \left[0, \min\left(T, \frac{\epsilon}{L (\| v \| + \delta)} , -\frac{2 (\rho + \delta)}{L(\| v \| + \delta)^2} \right)\right].
	\end{equation*}
\end{lemma}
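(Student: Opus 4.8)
The plan is to exploit the fact that the retraction $\pi$ always maps into $\mathcal{M}$, so that verifying $\pi(x+tv) \in \mathcal{N}$ reduces to checking $G_i(\pi(x+tv)) \le 0$ for every $i \in \{1,\dots,m_G\}$. I would split the index set into the \emph{inactive} indices $i \notin I_\epsilon(x)$ (where $G_i(x) < -\epsilon$) and the \emph{active} indices $i \in I_\epsilon(x)$ (where $-\epsilon \le G_i(x) \le 0$ and, by hypothesis, $\nabla G_i(x) v \le \rho < 0$), and treat each with its own bound on $t$. The three-fold minimum in the statement then corresponds to the auxiliary constant $T$ plus one bound per case.

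First I would establish, exactly as in the proof of Lemma \ref{lem:stepSizeS1Bound1}, the two uniform estimates that drive everything. Since $\lim_{t\to 0}\frac{\pi(x+tv)-x}{t} = v$ (cf. the proof of Lemma \ref{lem:stepSizeExists}) and since $\mathcal{K}\cap\mathcal{N}$ and $\mathcal{V}$ are compact, there is some $T>0$ so that for all such $x,v$ and all $t\in(0,T]$ one has $\|\frac{\pi(x+tv)-x}{t}-v\|<\delta$, hence both $\|\pi(x+tv)-x\| < t(\|v\|+\delta)$ and --- using that each $\nabla G_i$ is continuous and therefore bounded on the compact set $\mathcal{K}$, together with the Cauchy--Schwarz inequality --- $|\nabla G_i(x)(\frac{\pi(x+tv)-x}{t}-v)| < \delta$ for every $i$.

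For an inactive index, Lipschitz continuity of $G_i$ gives $G_i(\pi(x+tv)) \le G_i(x) + L\|\pi(x+tv)-x\| < -\epsilon + Lt(\|v\|+\delta)$, which is $\le 0$ as soon as $t \le \frac{\epsilon}{L(\|v\|+\delta)}$. For an active index, I would apply Lemma \ref{lem:lipIneq} to $G_i$ (whose gradient is $L$-Lipschitz) with $y = \pi(x+tv)$, obtaining
\[
	G_i(\pi(x+tv)) \le G_i(x) + \nabla G_i(x)(\pi(x+tv)-x) + \tfrac12 L\|\pi(x+tv)-x\|^2.
\]
Using $G_i(x)\le 0$, the decomposition $\nabla G_i(x)(\pi(x+tv)-x) = t[\nabla G_i(x)v + \nabla G_i(x)(\frac{\pi(x+tv)-x}{t}-v)] \le t(\rho+\delta)$, and the quadratic estimate $\tfrac12 L\|\pi(x+tv)-x\|^2 \le \tfrac12 L t^2(\|v\|+\delta)^2$, this reduces to $G_i(\pi(x+tv)) \le t(\rho+\delta) + \tfrac12 Lt^2(\|v\|+\delta)^2$. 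Dividing by $t>0$ shows the right-hand side is $\le 0$ precisely when $t \le -\frac{2(\rho+\delta)}{L(\|v\|+\delta)^2}$. Taking the minimum of $T$ and the two case-bounds then yields the claimed interval.

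The only genuinely delicate point is the uniformity in the second paragraph: the constant $T$ and the two estimates must be independent of the particular $x\in\mathcal{K}\cap\mathcal{N}$ and $v\in\mathcal{V}$. This is precisely the compactness argument already carried out for Lemma \ref{lem:stepSizeS1Bound1}, so I would reuse it; the remaining case analysis and the two descent-type estimates are then routine applications of Lipschitz continuity and Lemma \ref{lem:lipIneq}.
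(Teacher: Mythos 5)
Your proposal is correct and follows essentially the same route as the paper's own proof: the same split into inactive indices (handled via Lipschitz continuity of $G_i$ and the margin $\epsilon$) and active indices (handled via Lemma \ref{lem:lipIneq} together with the uniform estimate $\lim_{t\to 0}\frac{\pi(x+tv)-x}{t}=v$ borrowed from Lemma \ref{lem:stepSizeS1Bound1}), yielding the identical two case-bounds whose minimum with $T$ gives the claimed interval. No gaps.
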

\begin{proof}
	Since $G_i$ and all $\nabla G_i$ are continuously differentiable, there exists a (common) Lipschitz constant $L$ on $\mathcal{K}$. For $i \notin I_\epsilon(x)$ we have
	\begin{equation*}
		\begin{aligned}
			|G_i(\pi(x+tv)) - G_i(x)| &\leq L \| \pi(x+tv) - x \| = t L \Big\| \frac{\pi(x+tv) - x}{t} - v + v \Big\| \\
			&\leq t L \left( \Big\| \frac{\pi(x+tv) - x}{t} - v \Big\| + \| v \| \right) \leq t L (\delta + \| v \|) 
		\end{aligned}
	\end{equation*}
	for $t \leq T$ with some $T > 0$, where the latter estimation is similar to the proof of Lemma~\ref{lem:stepSizeS1Bound1}. Since
	\begin{equation*}
		t L (\delta + \| v \|) \leq \epsilon \Leftrightarrow t \leq \frac{\epsilon}{L (\delta + \| v \|)},
	\end{equation*}
	we have
	\begin{equation*}
		|G_i(\pi(x+tv)) - G_i(x)| \leq \epsilon \quad \forall t \in \left[ 0, \min\left( T, \frac{\epsilon}{L (\delta + \| v \|)} \right) \right],
	\end{equation*}
	which (due to $G_i(x) < -\epsilon$) results in
	\begin{equation*}
		G_i(\pi(x+tv)) \leq 0 \quad \forall t \in \left[ 0, \min\left( T, \frac{\epsilon}{L (\delta + \| v \|)} \right) \right].
	\end{equation*}
	For $i \in I_\epsilon(x)$ we apply Lemma \ref{lem:lipIneq} to get
	\begin{equation}
		\begin{aligned}
			& G_i(\pi(x+tv)) \leq G_i(x) + \nabla G_i(x) (\pi(x+tv) - x) + \frac{1}{2} L \| \pi(x+tv) - x \|^2 \\
			&\leq t \nabla G_i(x) \left( \left( \frac{\pi(x+tv) - x}{t} - v \right) + v \right) + \frac{1}{2} L t^2 \Big\| \left( \frac{\pi(x+tv) - x}{t} - v \right) + v  \Big\|^2. \label{eq:LemStepSizeS1Bound2Eq1}
		\end{aligned}
	\end{equation}
	Since all $\nabla G_i$ are bounded on $\mathcal{K}$, the first term can be estimated by
	\begin{equation*}
		\begin{aligned}
			&t \nabla G_i(x) \left( \left( \frac{\pi(x+tv) - x}{t} - v \right) + v \right)  \\
			&= t \nabla G_i(x) \left( \frac{\pi(x+tv) - x}{t} - v \right) + t \nabla G_i(x) v \\ 
			&\leq t (\delta + \nabla G_i(x) v) \leq t(\delta + \rho)
		\end{aligned}
	\end{equation*}
	for $t < T$ with some $T > 0$ (again like in the proof of Lemma \ref{lem:stepSizeS1Bound1}). For the second term we have
	\begin{equation*}
		\begin{aligned}
			&\frac{1}{2} L t^2 \Big\| \left( \frac{\pi(x+tv) - x}{t} - v \right) + v  \Big\|^2 \leq \frac{1}{2} L t^2 \left( \Big\| \frac{\pi(x+tv) - x}{t} - v \Big\| + \| v \| \right)^2 \\
			&\leq \frac{1}{2} L t^2 (\delta + \| v \|)^2
		\end{aligned}
	\end{equation*}
	for $t < T$. Combining both estimates with \eqref{eq:LemStepSizeS1Bound2Eq1}, we obtain
	\begin{equation*}
		G_i(\pi(x+tv)) \leq t(\delta + \rho) + \frac{1}{2} L t^2 (\delta + \| v \|)^2.
	\end{equation*}
	Therefore, we have $G_i(\pi(x+tv)) \leq 0$ if
	\begin{equation*}
		\begin{aligned}
			&t(\delta + \rho) + \frac{1}{2} L t^2 (\delta + \| v \|)^2 \leq 0 \\
			\Leftrightarrow \quad &t \leq -\frac{2 (\rho + \delta)}{L (\delta + \| v \|)^2}.
		\end{aligned}
	\end{equation*}
	Combining the bounds for $i \in I_\epsilon(x)$ and $i \notin I_\epsilon(x)$ (i.e.~taking the minimum of all upper bounds for $t$) completes the proof. 
\end{proof}

As mentioned before, a drawback of using an active set strategy is the fact that this approach naturally causes discontinuities for the descent direction at the boundary of our feasible set. But fortunately, $\alpha_1(\cdot,\epsilon)$ is still upper semi-continuous, which is shown in the next lemma. We will later see that this is sufficient to proof convergence.

\begin{lemma} \label{lem:alphaSemiCont}
	Let $(x_l)_l$ be a sequence in $\mathcal{N}$ with $\lim\limits_{l \rightarrow \infty} x_l = \bar{x}$. Then
	\begin{equation*}
		\limsup\limits_{l \rightarrow \infty} \alpha_1(x_l,\epsilon) \leq \alpha_1(\bar{x},\epsilon).
	\end{equation*}
\end{lemma}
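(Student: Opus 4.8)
The plan is to work with the equivalent $\min$--$\max$ reformulation of \eqref{SP1}: for $x \in \mathcal{N}$ write $\alpha_1(x,\epsilon) = \min_{v \in \text{ker}(DH(x))} \varphi_x(v)$, where $\varphi_x(v) := \max\bigl\{ \max_i \nabla F_i(x) v,\ \max_{l \in I_\epsilon(x)} \nabla G_l(x) v \bigr\} + \tfrac{1}{2}\|v\|^2$. To bound $\limsup_l \alpha_1(x_l,\epsilon)$ from above by $\alpha_1(\bar{x},\epsilon)$, I would take an optimal direction $\bar{v}$ at $\bar{x}$, transport it into the kernels $\text{ker}(DH(x_l))$ to produce feasible points for \eqref{SP1} at $x_l$, and pass to the limit.

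First I would establish the one-sided stability of the active set, which is the conceptual heart of the argument. Since each $G_i$ is continuous, $i \notin I_\epsilon(\bar{x})$ means $G_i(\bar{x}) < -\epsilon$, so $G_i(x_l) < -\epsilon$ for $l$ large; as there are finitely many indices, there is some $L$ with $I_\epsilon(x_l) \subseteq I_\epsilon(\bar{x})$ for all $l \geq L$. Thus in the limit the active set can only shrink, and this lets me dominate the $G$-part of $\varphi_{x_l}$ by the maximum over the fixed, larger index set $I_\epsilon(\bar{x})$.

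Next I would transport $\bar{v}$ onto the moving kernels. Let $\bar{v}$ attain $\alpha_1(\bar{x},\epsilon) = \varphi_{\bar{x}}(\bar{v})$. Since $x_l,\bar{x} \in \mathcal{M}$ and (A2) gives $\text{rk}(DH) = m_H$ there, the orthogonal projector $P_l := I - DH(x_l)^T (DH(x_l) DH(x_l)^T)^{-1} DH(x_l)$ onto $\text{ker}(DH(x_l))$ is well defined and depends continuously on $x_l$, so $P_l \to P_{\bar{x}}$. Setting $v_l := P_l \bar{v} \in \text{ker}(DH(x_l))$ and using $P_{\bar{x}}\bar{v} = \bar{v}$ gives $v_l \to \bar{v}$. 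Hence $v_l$ (with the induced $\beta_l$) is feasible for \eqref{SP1} at $x_l$, and by minimality $\alpha_1(x_l,\epsilon) \leq \varphi_{x_l}(v_l)$.

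Finally I would pass to the limit. For $l \geq L$ the inclusion $I_\epsilon(x_l) \subseteq I_\epsilon(\bar{x})$ yields $\varphi_{x_l}(v_l) \leq \tilde{\varphi}_{x_l}(v_l)$, where $\tilde{\varphi}$ is defined by replacing $I_\epsilon(x_l)$ with the fixed set $I_\epsilon(\bar{x})$. As $\tilde{\varphi}$ is a maximum over a fixed finite index set of continuous functions plus $\tfrac{1}{2}\|\cdot\|^2$, the map $(x,v) \mapsto \tilde{\varphi}_x(v)$ is jointly continuous, so $\tilde{\varphi}_{x_l}(v_l) \to \tilde{\varphi}_{\bar{x}}(\bar{v})$; and at $\bar{x}$ the fixed set is precisely the active set, whence $\tilde{\varphi}_{\bar{x}}(\bar{v}) = \varphi_{\bar{x}}(\bar{v}) = \alpha_1(\bar{x},\epsilon)$. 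Chaining these estimates gives $\limsup_l \alpha_1(x_l,\epsilon) \leq \alpha_1(\bar{x},\epsilon)$. I expect the main obstacle to be the equality constraint forcing every competitor direction to lie in the moving kernel $\text{ker}(DH(x_l))$; the continuous projector $P_l$ resolves this, while the discontinuity caused by the active set strategy is tamed exactly by its one-sided shrinking behavior, which is why only upper semi-continuity (rather than full continuity) survives.
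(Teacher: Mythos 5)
Your proof is correct, but it takes a genuinely different route from the paper's. The paper argues dually, at the level of optimal values: it extracts a subsequence realizing the limsup, uses the finiteness of the power set $\mathcal{P}(\{1,\dots,m_G\})$ to fix a single active set $I^s$ along a further subsequence, invokes the continuity of the fixed-active-set value function (Lemma~\ref{lem:avCont}, resting on the cited result from \cite{BFO2012}), and then compares two subproblems at $\bar{x}$: since $I^s \subseteq I_\epsilon(\bar{x})$, the subproblem with the full active set has more constraints and hence a larger optimal value. You instead argue primally: you transport the optimizer $\bar{v}$ at $\bar{x}$ into the moving kernels $\text{ker}(DH(x_l))$ via the orthogonal projectors $P_l$ (whose well-definedness and convergence is exactly where (A2)/LICQ enters), obtain feasible competitors $v_l \to \bar{v}$, and dominate $\varphi_{x_l}(v_l)$ by a jointly continuous surrogate $\tilde{\varphi}$ with the frozen index set $I_\epsilon(\bar{x})$. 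Both proofs hinge on the same one-sided stability of the active set (your eventual inclusion $I_\epsilon(x_l) \subseteq I_\epsilon(\bar{x})$ is equivalent to the paper's $I' \subseteq I_\epsilon(\bar{x})$), and both exploit monotonicity of a maximum under enlarging the index set --- the paper at the level of optimal values, you pointwise in $v$. What your approach buys is self-containedness: no appeal to the external continuity lemma and no subsequence bookkeeping, since the upper bound $\alpha_1(x_l,\epsilon) \leq \tilde{\varphi}_{x_l}(v_l)$ holds for the whole tail of the sequence and its right-hand side simply converges. What the paper's approach buys is brevity given machinery already in place, and it avoids any explicit construction on the moving constraint sets. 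Two minor points you should make explicit for completeness: the existence of the minimizer $\bar{v}$ (immediate from strong convexity of $\varphi_{\bar{x}}$ on the closed subspace $\text{ker}(DH(\bar{x}))$, or from Remark~\ref{rem:propertiesAlphaS1}), and the harmless degenerate case $I_\epsilon(x_l) = \emptyset$, where the $G$-part of the maximum is absent and the domination by $\tilde{\varphi}$ still holds.
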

\begin{proof}
	Let $I'$ be the set of indices of the ICs which are active for infinitely many elements of $(x_l)_l$, i.e.
	\begin{equation*}
		I' := \bigcap_{k \in \mathbb{N}} \bigcup_{l \geq k} I_\epsilon(x_l).
	\end{equation*}
	We first show that $I' \subseteq I_\epsilon(\bar{x})$. If $I' = \emptyset$ we obviously have $I' \subseteq I_\epsilon(\bar{x})$. Therefore, let $I' \neq \emptyset$ and $j \in I'$. Then there has to be a subsequence $(x_{l_u})_u$ of $(x_l)_l$ such that $j \in I_\epsilon(x_{l_u})$ for all $u \in \mathbb{N}$. Thus,
	\begin{equation*}
		G_j(x_{l_u}) \geq -\epsilon \quad \forall u \in \mathbb{N}.
	\end{equation*}
	Since $(x_{l_u})_u$ converges to $\bar{x}$ and by the continuity of $G_j$, we also have 
	\begin{equation*}
		G_j(\bar{x}) \geq -\epsilon,
	\end{equation*}
	hence $j \in I_\epsilon(\bar{x})$ and consequently, $I' \subseteq I_\epsilon(\bar{x})$.\\
	Let $(x_{l_k})_k$ be a subsequence of $(x_l)_l$ with
	\begin{equation*}
		\lim\limits_{k \rightarrow \infty} \alpha_1(x_{l_k},\epsilon) = \limsup\limits_{l \rightarrow \infty} \alpha_1(x_l,\epsilon).
	\end{equation*}
	Since there are only finitely many ICs, we only have finitely many possible active sets, and there has to be some $I^s \subseteq \{1,...,m_G\}$ which occurs infinitely many times in $(I_\epsilon(x_{l_k}))_k$. W.l.o.g.~assume that $I^s = I_\epsilon(x_{l_k})$ holds for all $k \in \mathbb{N}$. By Lemma \ref{lem:avCont}, the map $\alpha_1(\cdot,\epsilon)$ is continuous on the closed set $\{x \in \mathcal{N}: I_\epsilon(x) = I^s \}$ (by extending the objective function $F$ with the ICs in $I^s$). Thus, $\lim\limits_{k \rightarrow \infty} \alpha_1(x_{l_k},\epsilon)$ is the optimal value of a subproblem like \eqref{SP1} at $\bar{x}$, except we take $I^s$ as the active set. This completes the proof since $\alpha_1(\bar{x},\epsilon)$ is the optimal value of \eqref{SP1} at $\bar{x}$ with the actual active set $I_\epsilon(\bar{x})$ at $\bar{x}$ and $I^s \subseteq I' \subseteq I_\epsilon(\bar{x})$ holds. (The optimal value can only get larger when additional conditions in $I_\epsilon(\bar{x}) \setminus I^s$ are considered.)
\end{proof}

Lemma \ref{lem:stepSizeS1Bound1}, \ref{lem:stepSizeS1Bound2} and \ref{lem:alphaSemiCont} now enable us to prove the following convergence result.

\begin{theorem} \label{thm:convS1}
	Let $(x_l)_l$ be a sequence generated by Algorithm \ref{algo:S1} with $\epsilon > 0$. Then 
	\begin{equation*}
		\alpha_1(\bar{x},\epsilon) = 0
	\end{equation*}
	for all accumulation points $\bar{x}$ of $(x_l)_l$ or, if $(x_l)_l$ is finite, the last element $\bar{x}$ of the sequence.
\end{theorem}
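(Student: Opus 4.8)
The plan is to argue by contradiction in the spirit of \cite[Section 8]{FS2000}: assume an accumulation point $\bar x$ with $\alpha_1(\bar x,\epsilon)<0$ and show that this would force the step lengths $t_l$ to stay bounded away from zero, which is incompatible with the monotone decrease of $F$ enforced by the Armijo condition. First I would dispose of the finite case: a sequence generated by Algorithm~\ref{algo:S1} can only terminate through the stopping criterion in Step~5, i.e.\ $\alpha_1(\bar x,\epsilon)=0$, so there is nothing to prove. Hence assume the sequence is infinite, fix an accumulation point $\bar x$ and a subsequence $(x_{l_s})_s\to\bar x$. Since each component of $(F(x_l))_l$ is monotonically decreasing (by the Armijo condition) and $F$ is continuous, $(F(x_l))_l$ converges to $F(\bar x)$; telescoping the Armijo inequality $F_i(x_{l+1})-F_i(x_l)<\sigma t_l\nabla F_i(x_l)v_l<0$ then shows that $\sum_l(-\sigma t_l\nabla F_i(x_l)v_l)$ converges, whence $t_l\nabla F_i(x_l)v_l\to 0$ for every $i$.

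Next, suppose for contradiction that $\alpha_1(\bar x,\epsilon)<0$ and set $\rho:=\tfrac12\alpha_1(\bar x,\epsilon)<0$. By the upper semi-continuity of Lemma~\ref{lem:alphaSemiCont}, $\limsup_s\alpha_1(x_{l_s},\epsilon)\le\alpha_1(\bar x,\epsilon)<\rho$, so $\alpha_1(x_{l_s},\epsilon)<\rho<0$ for all large $s$. Reading off the optimal value of \eqref{SP1} as $\alpha_1=\max\bigl(\max_i\nabla F_i(x)v,\ \max_{l\in I_\epsilon(x)}\nabla G_l(x)v\bigr)+\tfrac12\|v\|^2$, this bound gives simultaneously $\max_i\nabla F_i(x_{l_s})v_{l_s}<\rho$ and $\nabla G_l(x_{l_s})v_{l_s}<\rho$ for all $l\in I_\epsilon(x_{l_s})$. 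The same identity together with $\alpha_1\le 0$ yields $\|v_{l_s}\|\le 2\max_i\|\nabla F_i(x_{l_s})\|$, so all $v_{l_s}$ lie in a fixed closed ball $\mathcal{V}$ and all $x_{l_s}$ in a compact neighborhood $\mathcal{K}$ of $\bar x$.

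Now I would invoke the two step-length bounds with a common, sufficiently small slack $\delta>0$. Lemma~\ref{lem:stepSizeS1Bound1} (using $DF(x_{l_s})v_{l_s}\le 0$ and $\max_i\nabla F_i(x_{l_s})v_{l_s}<\rho$) provides a uniform $\tau_1>0$ below which the Armijo inequality holds, and Lemma~\ref{lem:stepSizeS1Bound2} (using $\nabla G_l(x_{l_s})v_{l_s}<\rho$ on $I_\epsilon$ and $\epsilon>0$) provides a uniform $\tau_2>0$ below which $\pi(x_{l_s}+tv_{l_s})$ stays feasible; here the positivity of the numerators is exactly where $\rho<0$, the bound on $\|v_{l_s}\|$, and the boundedness of $\|\nabla F_i\|,\|\nabla G_i\|$ on $\mathcal{K}$ enter. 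Since Step~10 of Algorithm~\ref{algo:S1} selects (essentially) the largest admissible step of the form $\beta_0\beta^k$ meeting both requirements, and both hold for every $t\le\tau:=\min(\tau_1,\tau_2)$, the chosen step obeys $t_{l_s}\ge\min(\beta_0,\beta\tau)=:\tau'>0$. Passing to a further subsequence on which the index attaining $\max_i\nabla F_i(x_{l_s})v_{l_s}$ is constant, $t_{l}\nabla F_i(x_l)v_l\to 0$ forces $t_{l_s}\max_i\nabla F_i(x_{l_s})v_{l_s}\to 0$, which contradicts $t_{l_s}\max_i\nabla F_i(x_{l_s})v_{l_s}<\tau'\rho<0$. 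Therefore $\alpha_1(\bar x,\epsilon)=0$.

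I expect the main obstacle to be the uniform lower bound on the step lengths, which requires two things to mesh: converting the pointwise assumption $\alpha_1(\bar x,\epsilon)<0$ into the uniform negativity $\alpha_1(x_{l_s},\epsilon)<\rho$ along the subsequence — precisely what Lemma~\ref{lem:alphaSemiCont} buys us, since genuine continuity fails at the boundary because of the active-set jumps — and feeding the right quantities into Lemmas~\ref{lem:stepSizeS1Bound1} and~\ref{lem:stepSizeS1Bound2} under a coherent choice of $\mathcal{K}$, $\mathcal{V}$ and $\delta$ so that both bounds are positive and the largest admissible power-of-$\beta$ step inherits a common positive lower bound. The remaining bookkeeping — finitely many active sets, finitely many objective indices, and the strict-versus-nonstrict form of the Armijo inequality, all handled by repeated subsequence extraction and absorbing the slack into $\delta$ — I regard as routine.
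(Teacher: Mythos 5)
Your proof is correct and takes essentially the same route as the paper's: the identical contradiction argument, using the monotone decrease of $F$ to get $t_l DF(x_l)v_l \to 0$, Lemma~\ref{lem:alphaSemiCont} to convert $\alpha_1(\bar{x},\epsilon)<0$ into a uniform bound $\alpha_1(x_{l_s},\epsilon)\le\rho<0$ along the subsequence, and Lemmas~\ref{lem:stepSizeS1Bound1} and~\ref{lem:stepSizeS1Bound2} to obtain a positive lower bound on the step lengths. If anything, you are slightly more explicit than the paper in two spots it glosses over --- the bound $\|v_{l_s}\|\le 2\max_i\|\nabla F_i(x_{l_s})\|$ and the precise step-length bound $t_{l_s}\ge\min(\beta_0,\beta\tau)$ coming from the power-of-$\beta$ step selection.
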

\begin{proof}
	The case where $(x_l)_l$ is finite is obvious (cf.~Step 5 in Algorithm~\ref{algo:S1}), so assume that $(x_l)_l$ is infinite. Let $\bar{x}$ be an accumulation point of $(x_l)_l$. Since each component of $(F(x_l))_l$ is monotonically decreasing and $F$ is continuous, $(F(x_l))_l$ has to converge, i.e.
	\begin{equation*}
		\lim\limits_{l \rightarrow \infty} \left( F(x_{l+1}) - F(x_l) \right) = 0.
	\end{equation*}
	We therefore have
	\begin{equation}
		\lim\limits_{l \rightarrow \infty} t_l DF(x_l) v_l = 0. \label{eq:thmConvS1Eq1}
	\end{equation}
	Let $(x_{l_u})_u$ be a subsequence of $(x_l)_l$ with $\lim\limits_{u \rightarrow \infty} x_{l_u} = \bar{x}$. \\
	We now show the desired result by contradiction. Assume $\alpha_1(\bar{x},\epsilon) < 0$. By Lemma \ref{lem:alphaSemiCont} we have  
	\begin{equation*}
		\limsup\limits_{u \rightarrow \infty} \alpha_1(x_{l_u},\epsilon) < 0.
	\end{equation*}
	Let $\rho < 0$ so that $\alpha_1(x_{l_u},\epsilon) \leq \rho$ for all $u \in \mathbb{N}$. By definition of $\alpha_1$ we therefore have
	\begin{equation}
		\nabla F_i(x_{l_u}) v_{l_u} \leq \rho \quad \textrm{and} \quad \nabla G_j(x_{l_u}) v_{l_u} \leq \rho \label{eq:thmConvS1Eq2}
	\end{equation}
	for all $u \in \mathbb{N}$, $i \in \{1,...,m\}$, $j \in I_\epsilon(x_{l_u})$. Since $(\| v_{l_u} \|)_u$ is bounded, there is some $C_v$ with $\| v_{l_u} \| \leq C_v$ for all $u \in \mathbb{N}$. Due to the convergence of $(x_{l_u})_u$, all elements of the sequence are contained in a compact set. By Lemma \ref{lem:stepSizeS1Bound1}, all step lengths in
	\begin{equation*}
		\left[ 0, \min \left( T, -\frac{2 (1 - \sigma) \rho + \delta}{L^3 C_v^2} \right) \right]
	\end{equation*}
	satisfy the Armijo condition for arbitrary $\delta > 0$ and proper $T > 0$. With $\delta = -(1 - \sigma) \rho$ and proper $T_1 > 0$, all step lengths in
	\begin{equation*}
		\left[ 0, \min \left( T_1, -\frac{(1 - \sigma) \rho}{L^3 C_v^2} \right) \right]
	\end{equation*}
	satisfy the Armijo condition. By Lemma \ref{lem:stepSizeS1Bound2} we have $x_{l_u} + t v_{l_u} \in \mathcal{N}$ for all step lengths $t$ in
	\begin{equation*}
		\left[0, \min\left(T, \frac{\epsilon}{L (C_v + \delta)} , -\frac{2 (\rho + \delta)}{L(C_v + \delta)^2} \right)\right]
	\end{equation*}
	for arbitrary $\delta > 0$ and proper $T > 0$. With $\delta = -\frac{1}{2} \rho$ and a properly chosen $T_2 > 0$, the last interval becomes
	\begin{equation*}
		\left[0, \min\left(T_2, \frac{\epsilon}{L (C_v - \frac{1}{2} \rho)} , -\frac{\rho}{L(C_v -\frac{1}{2} \rho)^2} \right)\right].
	\end{equation*}
	We now define 
	\begin{equation*}
		t' := \min\left( T_1, -\frac{(1 - \sigma) \rho}{L^3 C_v^2}, T_2, \frac{\epsilon}{L (C_v - \frac{1}{2} \rho)} , -\frac{\rho}{L(C_v -\frac{1}{2} \rho)^2} \right).
	\end{equation*}
	Observe that $t'$ does not depend on the index $u$ of $(x_{l_u})_u$ and since $\epsilon > 0$, we have $t' > 0$. Since all $t \in [0,t']$ satisfy the Armijo condition and $x_{l_u} + t v_{l_u} \in \mathcal{N}$ for all $u$, we know that $t_{l_u}$ has a lower bound. \\
	By equality \eqref{eq:thmConvS1Eq1} we therefore have
	\begin{equation*}
		\lim\limits_{l \rightarrow \infty} DF(x_l) v_l = 0,
	\end{equation*}
	which contradicts \eqref{eq:thmConvS1Eq2}, and we have $\alpha_1(\bar{x},\epsilon) = 0$.
\end{proof}

We conclude our results about Strategy 1 with two remarks.

\begin{remark}
	Unfortunately, it is essential for the proof of the last theorem to choose $\epsilon > 0$, so we can not expect the accumulation points to be Pareto critical (cf. Remark \ref{rem:propertiesAlphaS1}). But since 
	\begin{equation*}
		\alpha_1(x,0) = 0 \quad \Rightarrow \quad \alpha_1(x,\epsilon) = 0 \quad \forall \epsilon  > 0,
	\end{equation*}
	Theorem \ref{thm:convS1} still shows that accumulation points satisfy a necessary optimality condition, it is just not as strict as Pareto criticality. To obtain convergence, one could decrease $\epsilon$ more and more during execution of Algorithm \ref{algo:S1}. For the unconstrained case, this was done in \cite[Algorithm 2]{FS2000}, and indeed results in  Pareto criticality of accumulation points. This result indicates that the accumulation points of our algorithm are close to Pareto critical points if we choose small values for $\epsilon$.
\end{remark}

\begin{remark} \label{rem:modConvS1}
	Observe that similar to the proof of Theorem \ref{thm:convEq}, the proof of Theorem \ref{thm:convS1} also works for slightly more general sequences. (This will be used in a later proof and holds mainly due to the fact that we consider subsequences in the proof Theorem \ref{thm:convS1}.)\\
	Let $(x_l)_l \in \mathcal{M}$, $K \subseteq \mathbb{N}$ with $| K | = \infty$ so that 
	\begin{itemize}
		\item $(F_i(x_l))_l$ is monotonically decreasing for all $i \in \{1,...,m\}$ and
		\item the step from $x_k$ to $x_{k+1}$ was realized by an iteration of Algorithm \ref{algo:S1} for all $k \in K$.
	\end{itemize}
	Let $\bar{x} \in \mathcal{M}$ such that there exists a sequence of indices $(l_s)_s \in K$ with $\lim_{s \rightarrow \infty} x_{l_s} = \bar{x}$. Then $\alpha_1(\bar{x},\epsilon) = 0$. 
\end{remark}

\subsubsection{Strategy 2 -- Active inequalities as equalities}
\label{subsubsec:Descent_EC_IC_S2}
We will now introduce the second active set strategy, where active inequalities are considered as equality constraints when calculating the descent direction. To be able to do so, we have to impose an additional assumption on the ICs.
\begin{itemize}
	\item[] \textbf{(A5):} The elements in  
	\begin{equation*}
		\{ \nabla H_1(x),..., \nabla H_{m_H}(x) \} \cup \{\nabla G_i(x) : i \in I_0(x)\}
	\end{equation*}
	are linearly independent for all $x \in \mathcal{N}$.
\end{itemize}
This extends (A2) and ensures that the set
\begin{equation*}
	\mathcal{M}_I := \mathcal{M} \cap \bigcap_{i \in I} G_i^{-1}(\{0\})
\end{equation*}
is either empty or a $C^2$-submanifold of $\R^n$. For $\mathcal{M}_I \neq \emptyset$ define
\begin{equation*}
	\pi_I: \R^n \rightarrow \mathcal{M}_I 
\end{equation*}
as the ``projection'' onto $\mathcal{M}_I$ (cf. Definition \ref{def:projection}). Treating the active ICs as ECs in \eqref{SPe}, we obtain the following subproblem for a given $x \in \mathcal{N}$.
\begin{equation*} \label{SP2}
	\begin{aligned}
		&\alpha_2(x) := && \underset{(v,\beta) \in \R^{n+1}}{\text{min}} &&  \beta + \frac{1}{2} \| v \|^2  && \\
		& && \text{s.t.} &&  \nabla F_i(x) v \leq \beta &&\forall i \in \{1,...,m\}, \\
		& && && \nabla H_j(x) v = 0 &&\forall j \in \{1,...,m_H\}, \\
		& && && \nabla G_l(x) v = 0 &&\forall l \in I_0(x).
	\end{aligned} \tag{SP2}
\end{equation*}

The following properties of \eqref{SPe} can be transferred to \eqref{SP2}.

\begin{remark} \label{rem:propertiesAlphaS2} 
	\begin{enumerate}
		\item By Lemma \ref{lem:critical_alpha0}, $x$ is Pareto critical in $\mathcal{M}_{I_0(x)}$ iff $\alpha_2(x) = 0$.
		\item By Lemma \ref{lem:alphaUnique_convexComb} (with extended ECs) we obtain uniqueness of the solution of \eqref{SP2}. 
	\end{enumerate}
\end{remark}

Unfortunately, $\alpha_2$ can not be used as a criterion to test for Pareto criticality in $\mathcal{N}$. $\alpha_2(x) = 0$ only means that the cone of descent directions of $F$, the tangent space of the ECs and the tangent space of the active inequalities have no intersection. On the one hand, this occurs when the cone of descent directions points outside the feasible set which means that $x$ is indeed Pareto critical. On the other hand, this also occurs when the cone points inside the feasible set. In this case $x$, is not Pareto critical. Consequently, $\alpha_2$ can only be used to test for Pareto criticality with respect to $\mathcal{M}_{I_0(x)}$, i.e.~the constrained MOP where the active ICs are actually ECs (and there are no other ICs). The following lemma shows a simple relation between Pareto criticality in $\mathcal{N}$ and in $\mathcal{M}_{I_0(x)}$.

\begin{lemma}
	A Pareto critical point $x$ in $\mathcal{N}$ is Pareto critical in $\mathcal{M}_{I_0(x)}$.
\end{lemma}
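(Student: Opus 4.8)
The plan is to prove the contrapositive, exploiting that the set of directions defining criticality on $\mathcal{M}_{I_0(x)}$ is contained in the set of directions defining criticality on $\mathcal{N}$. First I would unwind what Pareto criticality in $\mathcal{M}_{I_0(x)}$ means. Applying Definition~\ref{def:criticalEq} to the submanifold $\mathcal{M}_{I_0(x)}$, whose equality constraints are $H$ together with the active constraints $G_i$, $i \in I_0(x)$, and invoking (A5) to guarantee that these gradients have full rank (so that $\mathcal{M}_{I_0(x)}$ is indeed a manifold with tangent space exactly the joint kernel), I obtain
\begin{equation*}
	T_x \mathcal{M}_{I_0(x)} = \text{ker}(DH(x)) \cap \bigcap_{i \in I_0(x)} \text{ker}(\nabla G_i(x)).
\end{equation*}
Thus Pareto criticality of $x$ in $\mathcal{M}_{I_0(x)}$ reads: there is no $v \in \text{ker}(DH(x))$ with $DF(x) v < 0$ and $\nabla G_i(x) v = 0$ for all $i \in I_0(x)$.

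Then I would compare this with Pareto criticality in $\mathcal{N}$, which forbids the existence of $v \in \text{ker}(DH(x))$ with $DF(x) v < 0$ and $\nabla G_i(x) v \leq 0$ for all $i \in I_0(x)$. The key observation is the trivial inclusion that any $v$ satisfying $\nabla G_i(x) v = 0$ for all active $i$ also satisfies $\nabla G_i(x) v \leq 0$ for all active $i$. Hence, should some $v$ witness non-criticality on $\mathcal{M}_{I_0(x)}$ (that is, $DF(x) v < 0$ with all active $\nabla G_i(x) v = 0$), the very same $v$ witnesses non-criticality on $\mathcal{N}$. Taking the contrapositive yields that Pareto criticality in $\mathcal{N}$ implies Pareto criticality in $\mathcal{M}_{I_0(x)}$.

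There is essentially no analytic obstacle here; the only point requiring care is the correct identification of the tangent space $T_x \mathcal{M}_{I_0(x)}$, which is precisely where (A5) enters. Once that identification is in place, the statement is an immediate consequence of the set inclusion $\{ v : \nabla G_i(x) v = 0 \ \forall i \in I_0(x)\} \subseteq \{ v : \nabla G_i(x) v \leq 0 \ \forall i \in I_0(x)\}$, and no further estimates are needed.
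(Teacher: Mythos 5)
Your proposal is correct and is essentially the paper's own argument: both rest on identifying $T_x \mathcal{M}_{I_0(x)} = \text{ker}(DH(x)) \cap \bigcap_{i \in I_0(x)} \text{ker}(\nabla G_i(x))$ (valid by (A5)) and on the inclusion of this tangent space in the cone $\{v \in \text{ker}(DH(x)) : \nabla G_i(x) v \leq 0 \ \forall i \in I_0(x)\}$. The paper phrases it directly (nonexistence of a descent direction over the larger set implies nonexistence over the smaller set) while you take the contrapositive, but this is the same proof.
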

\begin{proof}
	By definition, there is no $v \in \text{ker}(DH(x))$ such that
	\begin{equation*}
		DF(x) v < 0 \quad \textrm{and} \quad \nabla G_i(x) \leq 0 \quad \forall i \in I_0(x).
	\end{equation*}
	For $w \in \R^n$ let $w^\perp$ denote the orthogonal complement of the linear subspace of $\R^n$ spanned by $w$. In particular, there exists no 
	\begin{equation*}
		v \in \text{ker}(DH(x)) \cap \bigcap_{i \in I_0(x)} \nabla G_i(x)^{\perp} = T_x (\mathcal{M}_{I_0(x)})
	\end{equation*}
	such that $DF(x) v < 0$. Therefore, $x$ is Pareto critical in $\mathcal{M}_{I_0(x)}$.
\end{proof}

The fact that $\alpha_2$ can not be used to test for Pareto criticality (in $\mathcal{N}$) poses a problem when we want to use \eqref{SP2} to calculate a descent direction. For a general sequence $(x_n)_n$, the active set $I_0(x)$ can change in each iteration. Consequently, active ICs can also become inactive. However, using \eqref{SP2}, ICs can not become inactive. By Remark \ref{rem:propertiesAlphaS2} and Theorem \ref{thm:convEq}, simply using Algorithm~\ref{algo:eq} (with \eqref{SP2} instead of \eqref{SPe})
will result in a point $x \in \mathcal{N}$ that is Pareto critical in $\mathcal{M}_I$ for some $I \subseteq \{1,...,m_G\}$, but not necessarily Pareto critical in $\mathcal{N}$. This means we can not just use \eqref{SP2} on its own. To solve this problem, we need a mechanism that deactivates active inequalities when appropriate. To this end, we combine Algorithms \ref{algo:eq} and \ref{algo:S1} and introduce a parameter $\eta > 0$. If in the current iteration we have $\alpha_2 > -\eta$ after solving \eqref{SP2}, we calculate a descent direction using \eqref{SP1} and use the step length in Algorithm \ref{algo:S1}. Otherwise, we take a step in the direction we calculated with \eqref{SP2}. The entire procedure is summarized in Algorithm \ref{algo:S2}.

\begin{algorithm} 
	\caption{(Descent method for equality and inequality constrained MOPs, Strategy 2)}
	\label{algo:S2}
	\begin{algorithmic}[1]
		\Require $x \in \R^n$, $\beta_0 > 0$, $\beta \in (0,1)$, $\epsilon > 0$, $\eta > 0$, $\sigma \in (0,1)$.
		\State Compute some $x_0$ with $\| x - x_0 \| = \min\{\| x - x_0 \| : x_0 \in \mathcal{N \}}$.
		\For{$l = 0$, $1$, ...}
			\State Identify the active set $I_0(x_l)$.
			\State Compute the solution $v_l$ of \eqref{SP2} at $x_l$.
			\If{$\alpha_2(x_l) > -\eta$}
				\State Compute $t_l$ and $v_l$ as in Algorithm \ref{algo:S1}. If $\alpha_1(x_l,\epsilon) = 0$, stop.
				\State Set $x_{l+1} = \pi(x_l + t_l v_l)$.
			\Else
				\State Compute 
					\begin{equation*}
						k_l := \min\{k \in \mathbb{N}: F(\pi_{I_0(x_l)}(x_l + \beta_0 \beta^k v_l)) < F(x) + \sigma \beta_0 \beta^k DF(x_l) v_l \}.
					\end{equation*}
				\Statex \hspace{23pt} and set $t_l := \beta_0 \beta^{k_l}$.
					\If{$\pi(x_l + t_l v_l) \notin \mathcal{N}$}
						\State Compute some $t < t_l$ such that 
							\begin{equation*}
								\pi_{I_0(x_l)}(x_l + t v_l) \in \mathcal{N} \quad \textrm{and} \quad | I_0(\pi_{I_0(x_l)}(x_l + t v_l)) | > | I_0(x_l) |
							\end{equation*}
						\Statex \hspace{37pt} and the Armijo condition holds. Set $t_l = t$.
					\EndIf
				\State Set $x_{l+1} = \pi_{I_0(x_l)}(x_l + t_l v_l)$.
			\EndIf
		\EndFor
	\end{algorithmic}
\end{algorithm}


Since Algorithm \ref{algo:S2} generates the exact same sequence as Algorithm \ref{algo:S1} if we take $\eta$ large enough, it can be seen as a generalization of Algorithm \ref{algo:S1}. To obtain the desired behavior of ``moving along the boundary'', one has to consider two points when choosing $\eta$: On the one hand, $\eta$ should be small enough such that active boundaries that indeed possess a Pareto critical point are not activated and deactivated too often. On the other hand, $\eta$ should be large enough so that the sequence actually moves along the boundary and does not ``bounce off'' too early. Additionally, if it is known that all Pareto critical points in $\mathcal{M}_{I_0(x)}$ are also Pareto critical in $\mathcal{N}$, $\eta < 0$ can be chosen such that Strategy 1 is never used during execution of Algorithm \ref{algo:S2}.

In order to show that Algorithm \ref{algo:S2} is well defined, we have to prove existence of the step length.
\begin{lemma}
	Let $x \in \mathcal{N}$, $\beta_0 > 0$, $\beta \in (0,1)$, $\sigma \in (0,1)$ and 
	\begin{equation*}
		v \in \text{ker}(DH(x)) \cap \bigcap_{i \in I_0(x)} \nabla G_i(x)^{\perp} \quad \textrm{with} \quad DF(x)v < 0.
	\end{equation*}
	Then there is some $l \in \mathbb{N}$ so that
	\begin{equation*}
		F(\pi_{I_0(x)}(x + \beta_0 \beta^k v)) < F(x) + \sigma \beta_0 \beta^k DF(x)v \quad \forall k > l
	\end{equation*}
	and
	\begin{equation*}
		\pi_{I_0(x)}(x + \beta_0 \beta^k v) \in \mathcal{N} \quad \forall k > l. 
	\end{equation*}
\end{lemma}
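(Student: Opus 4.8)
The plan is to reduce both claims to results already established for a general $C^2$-submanifold, applied here to $\mathcal{M}_{I_0(x)}$. First I would record two structural facts. Since $x \in \mathcal{N}$ is feasible we have $G_i(x) \leq 0$ for all $i$, while $i \in I_0(x)$ forces $G_i(x) \geq 0$; hence $G_i(x) = 0$ for every active index and $H(x) = 0$, so that $x \in \mathcal{M}_{I_0(x)}$ and $\pi_{I_0(x)}(x) = x$. Moreover, by (A5) the set $\mathcal{M}_{I_0(x)}$ is a $C^2$-submanifold whose tangent space at $x$ is exactly $\text{ker}(DH(x)) \cap \bigcap_{i \in I_0(x)} \nabla G_i(x)^{\perp}$, so the hypothesis says precisely that $v \in T_x \mathcal{M}_{I_0(x)}$ with $DF(x)v < 0$.

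For the Armijo inequality I would apply Lemma \ref{lem:stepSizeExists} verbatim, but with $\mathcal{M}$ replaced by $\mathcal{M}_{I_0(x)}$ and $\pi$ replaced by the retraction $\pi_{I_0(x)}$. Nothing in the proof of that lemma uses $\mathcal{M}$ beyond the facts that $R_\pi$ is a $C^1$-retraction and that $v$ lies in the corresponding tangent space; both hold here by Definition \ref{def:projection} applied to $\mathcal{M}_{I_0(x)}$. This yields an index $K_1$ such that $F(\pi_{I_0(x)}(x+\beta_0\beta^k v)) < F(x) + \sigma\beta_0\beta^k DF(x)v$ for all $k > K_1$.

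For feasibility I would split the constraints by activity. For $i \in I_0(x)$ the point $\pi_{I_0(x)}(x + \beta_0\beta^k v)$ lies in $\mathcal{M}_{I_0(x)}$ by construction, so $G_i(\pi_{I_0(x)}(x+\beta_0\beta^k v)) = 0 \leq 0$ and likewise $H$ vanishes there; these constraints hold with no work. For every inactive index $j \notin I_0(x)$ we have the strict inequality $G_j(x) < 0$. Since $\beta_0\beta^k v \to 0$ and $\pi_{I_0(x)}$ is continuous near $x$ with $\pi_{I_0(x)}(x) = x$, we get $\pi_{I_0(x)}(x+\beta_0\beta^k v) \to x$ and hence $G_j(\pi_{I_0(x)}(x+\beta_0\beta^k v)) \to G_j(x) < 0$; as there are only finitely many such $j$, a single index $K_2$ renders all of them strictly negative for $k > K_2$. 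Taking $l := \max(K_1, K_2)$ gives both conclusions simultaneously.

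The one point that needs care — and hence the part I would be most careful about — is the legitimacy of transferring Lemma \ref{lem:stepSizeExists} to $\mathcal{M}_{I_0(x)}$: one must check that $\pi_{I_0(x)}$ is genuinely the single-valued $C^1$-retraction of Definition \ref{def:projection} in a neighborhood of $x$ (guaranteed by (A5) together with \cite[Lemma 3.1, Proposition 3.2]{AM2012}), and that the difference quotient $(\pi_{I_0(x)}(x+\beta_0\beta^k v)-x)/(\beta_0\beta^k)$ converges to $v$ through the retraction identity $DR_{\pi_{I_0(x)}}(x,\cdot)(0) = \text{id}$. Everything else is continuity together with the finiteness of the inactive index set.
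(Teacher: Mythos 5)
Your proof is correct and is essentially the paper's proof in unfolded form: the paper's one-line argument invokes Lemma \ref{lem:stepSizeS1Exists} with the active inequality constraints at $I_0(x)$ treated as additional equality constraints (so the manifold becomes $\mathcal{M}_{I_0(x)}$ with retraction $\pi_{I_0(x)}$) and $\epsilon = 0$, under which the active set of the remaining inequality constraints at $x$ is empty. Your two steps --- Lemma \ref{lem:stepSizeExists} transferred to $\mathcal{M}_{I_0(x)}$ for the Armijo inequality, plus continuity of $G_j \circ \pi_{I_0(x)}$ at the finitely many inactive indices for feasibility --- are precisely the ingredients of the proof of Lemma \ref{lem:stepSizeS1Exists} specialized to this situation, including your careful point that (A5) guarantees $\mathcal{M}_{I_0(x)}$ is a $C^2$-submanifold whose tangent space at $x$ is $\text{ker}(DH(x)) \cap \bigcap_{i \in I_0(x)} \nabla G_i(x)^{\perp}$.
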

\begin{proof}
	Using Lemma \ref{lem:stepSizeS1Exists} with the active ICs at $I_0(x)$ as additional ECs and $\epsilon = 0$.
\end{proof}

\begin{remark}
	In theory, it is still possible that Step 11 in Algorithm \ref{algo:S2} fails to find a step length that satisfies the conditions, because the Armijo condition does not have to hold for all $t < t_l$. In that case, this problem can be avoided by choosing smaller values for $\beta$ and $\beta_0$.
\end{remark}

Since Algorithm \ref{algo:S2} is well defined, we know that it generates a sequence $(x_l)_l$ in $\mathcal{N}$ with $F(x_{l+1}) < F(x_l)$ for all $l \geq 0$. The following theorem shows that sequences generated by Algorithm \ref{algo:S2} converge in the same way as sequences generated by Algorithm \ref{algo:S1}. Since Algorithm \ref{algo:S2} can be thought of as a combination of Algorithm \ref{algo:eq} with modified ECs and Algorithm \ref{algo:S1}, the idea of the proof is a case analysis of how a generated sequence is influenced by those algorithms. We call Step 5 and Step 10 in Algorithm \ref{algo:S2} \emph{active in iteration $i$}, if the respective conditions of those steps are met in the $i$-th iteration.

\begin{theorem}
	Let $(x_l)_l$ be a sequence generated by Algorithm \ref{algo:S2} with $\epsilon > 0$. Then 
	\begin{equation*}
		\alpha_1(\bar{x},\epsilon) = 0
	\end{equation*}
	for all accumulation points $\bar{x}$ of $(x_l)_l$ or, if $(x_l)_l$ is finite, the last element $\bar{x}$ of the sequence.
\end{theorem}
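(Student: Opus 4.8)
The plan is to follow the proof of Theorem~\ref{thm:convS1}, but to organise it as a case analysis according to which branch of Algorithm~\ref{algo:S2} is responsible for a given convergent subsequence, invoking Remark~\ref{rem:modConvS1} whenever Strategy~1 steps (Step~6) are involved. The finite case is immediate, since the only stopping criterion occurs in Step~6 and forces $\alpha_1(\bar x,\epsilon)=0$ for the last element. So assume $(x_l)_l$ is infinite, fix an accumulation point $\bar x$ together with a subsequence $x_{l_s}\to\bar x$ contained in a compact set $\mathcal K$. As each component of $(F(x_l))_l$ is monotonically decreasing and converges to $F_i(\bar x)$, the telescoping sums $\sum_l (F_i(x_l)-F_i(x_{l+1}))$ are finite.

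First I would record a uniform descent estimate for the else-branch (Steps~8--12). Whenever the step from $x_l$ is taken there we have $\alpha_2(x_l)\le-\eta$, and transferring the convex-combination characterisation to \eqref{SP2} (Remark~\ref{rem:propertiesAlphaS2} and Lemma~\ref{lem:alphaUnique_convexComb}) gives $\|v_l\|^2=-2\alpha_2(x_l)$ and $\nabla F_i(x_l)v_l\le\max_j\nabla F_j(x_l)v_l=2\alpha_2(x_l)\le-2\eta$ for all $i$. Since the step length satisfies the Armijo condition in both Step~9 and Step~11, this yields $F_i(x_l)-F_i(x_{l+1})\ge 2\sigma\eta\,t_l$, and hence $\sum t_l<\infty$ and $t_l\to0$ along the subsequence of all else-branch steps.

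Now I would split into cases on $(x_{l_s})_s$. If Step~5 is active (so Strategy~1 is used) for infinitely many $s$, these are iterations of Algorithm~\ref{algo:S1} and Remark~\ref{rem:modConvS1} immediately gives $\alpha_1(\bar x,\epsilon)=0$. Otherwise the else-branch is used for all large $s$; since there are only finitely many possible active sets, I would choose an active set $I^*$ of maximal cardinality for which some else-branch subsequence $x_{m_r}\to\bar x$ satisfies $I_0(x_{m_r})=I^*$, and refine according to whether Step~10 is active. If Step~10 is inactive for infinitely many $r$, the step is a plain descent step on $\mathcal M_{I^*}$, and the analogue of Lemma~\ref{lem:stepSizeS1Bound1} with $\pi_{I^*}$ in place of $\pi$ (using $\nabla F_i(x_{m_r})v_{m_r}\le-2\eta$ and the bound on $\|v_{m_r}\|$) gives a positive lower bound on the step lengths, contradicting $t_{m_r}\to0$. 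Hence Step~10 is active for all large $r$, so $t_{m_r}\to0$ forces $x_{m_r+1}=\pi_{I^*}(x_{m_r}+t_{m_r}v_{m_r})\to\bar x$ while Step~11 guarantees $I_0(x_{m_r+1})\supsetneq I^*$.

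It remains to examine the steps issued at the points $x_{m_r+1}$, after refining them to a common active set $I^{**}\supsetneq I^*$. If Step~5 is active there for infinitely many $r$, Remark~\ref{rem:modConvS1} applied to $(x_{m_r+1})_r$ yields $\alpha_1(\bar x,\epsilon)=0$; otherwise the else-branch is used for infinitely many $r$, so $(x_{m_r+1})_r$ is an else-branch subsequence converging to $\bar x$ whose active set $I^{**}$ has cardinality strictly larger than $|I^*|$, contradicting maximality. Thus $\alpha_1(\bar x,\epsilon)=0$ in every case. I expect the main obstacle to be precisely these Step~10 (active-set-enlarging) steps: they have vanishing step length and a non-standard Armijo selection, so they are not covered by Remark~\ref{rem:modConvEq}; the maximality of $I^*$ is the device that turns a putative infinite chain of enlargements into either a contradiction or a Strategy~1 subsequence that Remark~\ref{rem:modConvS1} resolves.
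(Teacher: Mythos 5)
Your proof is correct and shares the paper's overall skeleton --- the finite case via the stopping criterion in Step 6, a case analysis on which branch of Algorithm \ref{algo:S2} generates the steps along a convergent subsequence, Remark \ref{rem:modConvS1} whenever Strategy 1 acts infinitely often, and finiteness of the number of ICs to terminate the chain of Step-10 activations --- but it replaces the paper's two key technical devices with genuinely different ones. First, where the paper rules out ``pure else-branch'' behavior (its Cases 2 and 3) by re-entering the proof of Theorem \ref{thm:convEq} via Remark \ref{rem:modConvEq} to conclude $\alpha_2(x_{l_u}) \to 0$ and contradict the activation threshold $\alpha_2 \le -\eta$ of Step 5, you argue quantitatively: exactness of the \eqref{SP2} solution together with Lemma \ref{lem:alphaUnique_convexComb} gives $\|v_l\|^2 = -2\alpha_2(x_l)$ and $\max_i \nabla F_i(x_l)v_l = 2\alpha_2(x_l) \le -2\eta$, so the Armijo condition makes the else-branch step lengths summable, while the analogue of Lemma \ref{lem:stepSizeS1Bound1} for $\pi_{I^*}$ bounds Step-10-inactive step lengths away from zero near $\bar{x}$; the contradiction is then immediate. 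This buys self-containedness (no appeal to the internals of another proof) and automatically subsumes the paper's separate case $\limsup_{s\to\infty} t_{l_s} > 0$, at the price of needing exact \eqref{SP2} solutions (which Algorithm \ref{algo:S2} indeed computes) and of transferring Lemma \ref{lem:stepSizeS1Bound1} to the manifolds $\mathcal{M}_{I^*}$, which is legitimate under (A5). Second, where the paper handles the successor sequences $(x_{l_s+1})_s, (x_{l_s+2})_s, \dots$ by an explicit iteration that must terminate because the count of active ICs grows at each stage, you fix an else-branch active set $I^*$ of maximal cardinality among subsequences converging to $\bar{x}$, so that the successor points $x_{m_r+1}$ either feed Remark \ref{rem:modConvS1} or yield an else-branch subsequence with strictly larger active set (note $I^* \subseteq I_0(x_{m_r+1})$ since $\pi_{I^*}$ maps into $\mathcal{M}_{I^*}$), contradicting maximality. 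This is the same finite-IC induction in extremal form, and it is arguably cleaner, since it avoids the paper's bookkeeping about which prerequisites hold ``for infinitely many $s$'' at each stage of its procedure.
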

\begin{proof}
	If $(x_l)_l$ is finite, the stopping criterion in Step 6 was met, so we are done. Let $(x_l)_l$ be infinite, $\bar{x}$ an accumulation point and $(x_{l_s})_s$ a subsequence of $(x_l)_l$ so that $\lim\limits_{s \rightarrow \infty} x_{l_s} = \bar{x}$. Consider the following four cases for the behavior of Algorithm \ref{algo:S2}: \\
	\emph{Case 1:} In the iterations where $l = l_s$ with $s \in \mathbb{N}$, Step 5 is active infinitely many times. Then the proof follows from Theorem \ref{thm:convS1} and Remark \ref{rem:modConvS1}. \\
	\emph{Case 2:} In the iterations where $l = l_s$ with $s \in \mathbb{N}$, Step 5 and 10 are both only active a finite number of times. W.l.o.g. they are never active. Since the power set $\mathcal{P}(\{ 1,...,m_G \})$ is finite, there has to be some $I \in \mathcal{P}(\{ 1,...,m_G \})$ and a subsequence $(x_{l_u})_u$ of $(x_{l_s})_s$ so that $I_0(x_{l_u}) = I$ for all $u \in \mathbb{N}$. In this case, the proof of Theorem \ref{thm:convEq} and Remark \ref{rem:modConvEq} (with modified ECs) shows that $\lim\limits_{u \rightarrow \infty} \alpha_2(x_{l_u}) = 0$, so Step 5 has to be active for some iteration with $l = k_u$, which is a contradiction. Thus Case 2 can not occur. \\
	\emph{Case 3:} In the iterations where $l = l_s$ with $s \in \mathbb{N}$, Step 5 is only active a finite number of times and $\limsup\limits_{s \rightarrow \infty} t_{l_s} > 0$. W.l.o.g. Step 5 is never active and $\lim\limits_{s \rightarrow \infty} t_{l_s} > 0$. As in Case 2 there has to be a subsequence $(x_{l_u})_u$ of $(x_{l_s})_s$ and some $I$ so that $I_0(x_{l_u}) = I$ for all $u \in \mathbb{N}$. However, the first case in the proof of Theorem \ref{thm:convEq} (with modified ECs) yields  $\lim\limits_{u \rightarrow \infty} \alpha_2(x_{l_u}) = 0$, so Step 5 has to be active at some point which is a contradiction. Consequently, Case 3 can not occur either. \\
	\emph{Case 4:} In the iterations where $l = l_s$ with $s \in \mathbb{N}$, Step 5 is only active a finite amount of times (w.l.o.g. never), Step 10 is active an infinite amount of times (w.l.o.g. always) and $\lim\limits_{s \rightarrow \infty} t_{l_s} = 0$. We have
	\begin{equation*}
		\| \bar{x} - x_{l_s + 1} \| = \| \bar{x} - x_{l_s} + x_{l_s} - x_{l_s + 1} \| \leq \| \bar{x} - x_{l_s} \| + \| x_{l_s} - x_{l_s + 1} \|.
	\end{equation*}
	For $s \in \mathbb{N}$ the first term gets arbitrarily small since we have $\lim_{s \rightarrow \infty} x_{l_s} = \bar{x}$ by assumption. As in Case 2, we can assume w.l.o.g.~that $I_0(x_{l_s}) = I$. Then the second term becomes arbitrarily small since $\lim\limits_{s \rightarrow \infty} t_{l_s} = 0$, $(\| v_{l_s} \|)_s$ is bounded and the projection $\pi_{I_0(x_{l_s})} = \pi_I$ is continuous. Thus $(x_{l_s + 1})_s$ is another sequence that converges to $\bar{x}$. Since Step 10 is always active for $l = l_s$, we have 
	\begin{equation}
		I_0(x_{l_s + 1}) \geq I_0(x_{l_s}) + 1 \quad \forall s \in \mathbb{N}. \label{eq:thmConvS2Eq1}
	\end{equation}
	If the prerequisites of Case 1 hold for $(x_{l_s + 1})_s$, the proof is complete. So w.l.o.g.~assume that $(x_{l_s + 1})_s$ satisfies the prerequisites of Case 4. Now consider the sequence $(x_{l_s + 2})_s$. Using the same argument as above, we only have to consider Case 4 for this sequence and we obtain
	\begin{equation*}
		I_0(x_{l_s + 2}) \geq I_0(x_{l_s}) + 2 \quad \textrm{for infinitely many } s \in \mathbb{N}.
	\end{equation*}
	(Note that this inequality does not have to hold for all $s \in \mathbb{N}$ since it is possible that we only consider a subsequence in Case 4.) If we continue this procedure, there has to be some $k \in \mathbb{N}$ such that the sequence $(x_{l_s + k})_s$ converges to $\bar{x}$ but does not satisfy the prerequisites of Case 4, since there is only a finite amount of ICs and by inequality \eqref{eq:thmConvS2Eq1} the amount of active ICs increases in each step of our procedure. Thus, $(x_{l_s + k})_s$ has to satisfy the prerequisites of Case 1, which completes the proof.
\end{proof}

\section{Numerical results}
\label{sec:Numerical_Results}
In this section we will present and discuss the typical behavior of our method using an academic example. Since Algorithm \ref{algo:S1} is a special case of Algorithm \ref{algo:S2} (when choosing $\eta$ large enough), we will from now on only consider Algorithm \ref{algo:S2} with varying $\eta$. Consider the following example for an inequality constrained MOP.

\begin{example} \label{ex:1}
	\begin{figure}[ht]
		\centering
		\includegraphics[scale=0.7]{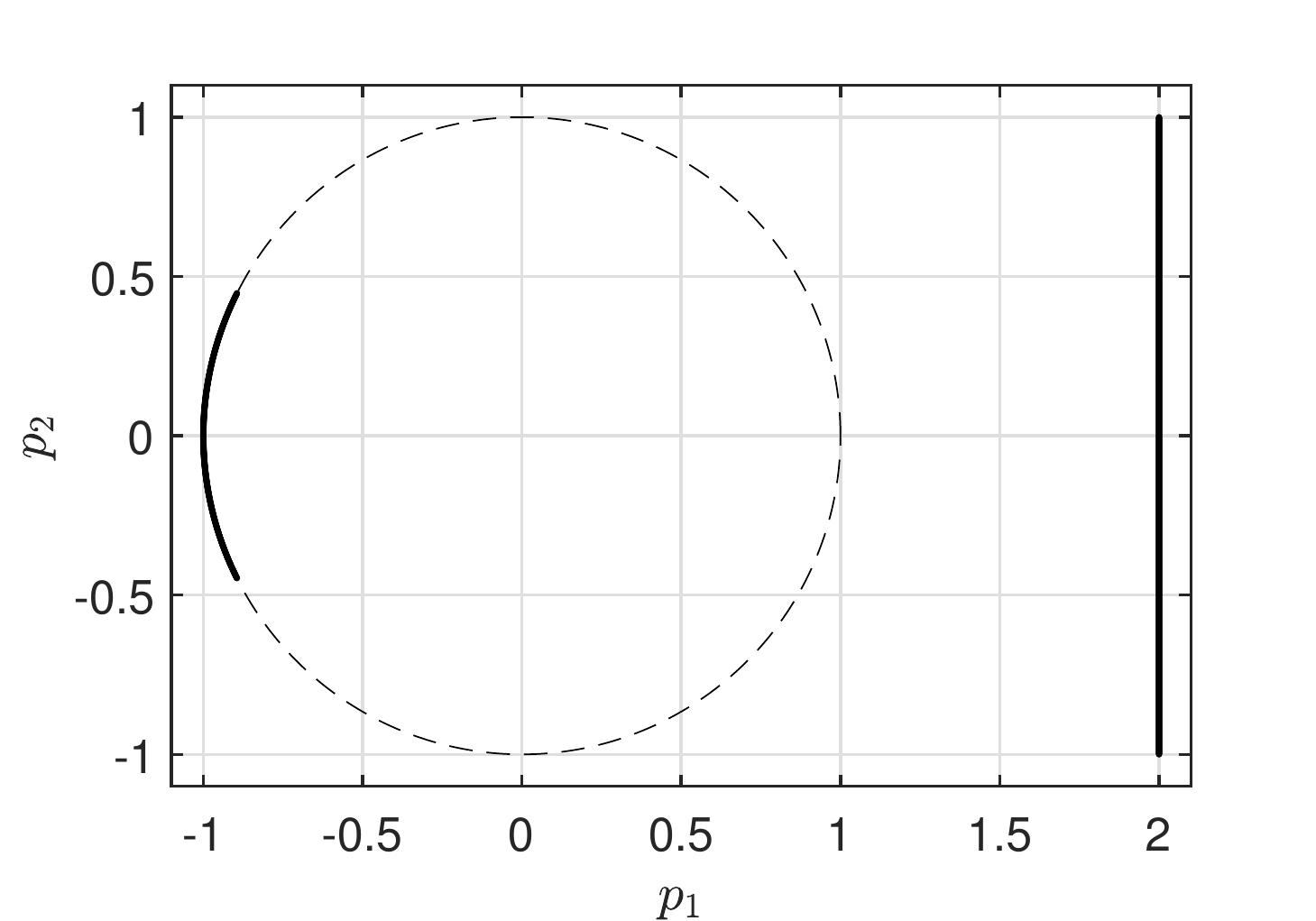}
		\caption{The unit circle (dashed) and the set of Pareto critical points (thick).}
		\label{fig:ex1Critical}
	\end{figure}
	Let 
	\begin{equation*}
		F : \R^2 \rightarrow \R^2, \quad
		\begin{pmatrix}
			x_1\\
			x_2
		\end{pmatrix}
		\mapsto 
		\begin{pmatrix}
			(x_1 - 2)^2 + (x_2 - 1)^2\\
			(x_1 - 2)^2 + (x_2 + 1)^2
		\end{pmatrix}
	\end{equation*}
	and
	\begin{equation*}
		G : \R^2 \mapsto \R, \quad
		\begin{pmatrix}
			x_1\\
			x_2
		\end{pmatrix}
		\mapsto -x_1^2 - x_2^2 + 1.
	\end{equation*}
	Consider the MOP
	\begin{equation*}
		\begin{aligned}
			& \underset{x \in \R^{2}}{\text{min}} && F(x), && \\
			& \text{s.t.} &&  G(x) \leq 0. &&
		\end{aligned}
	\end{equation*}
	The set of feasible points is $\R^2$ without the interior of the unit circle (cf.~the dashed line in Figure~\ref{fig:ex1Critical}). The set of Pareto critical points is
	\begin{equation*}
		\left\{ 
		\begin{pmatrix}
			\cos(t)\\
			\sin(t)
		\end{pmatrix}
		: t \in \left[ \pi-\theta, \pi + \theta \right]
		\right\} 
		\cup 
		\left\{ 
		\begin{pmatrix}
			2\\
			-1+s
		\end{pmatrix}
		: s \in [0,2] \right\},
	\end{equation*}
	with $\theta = \arctan(\frac{1}{2})$ and the line between $(2,1)^T$ and $(2,-1)^T$ being globally Pareto optimal. Figure \ref{fig:ex1Critical} shows the set of Pareto critical points.
\end{example}

\begin{figure}[t]
	\centering
	\includegraphics[scale=0.55]{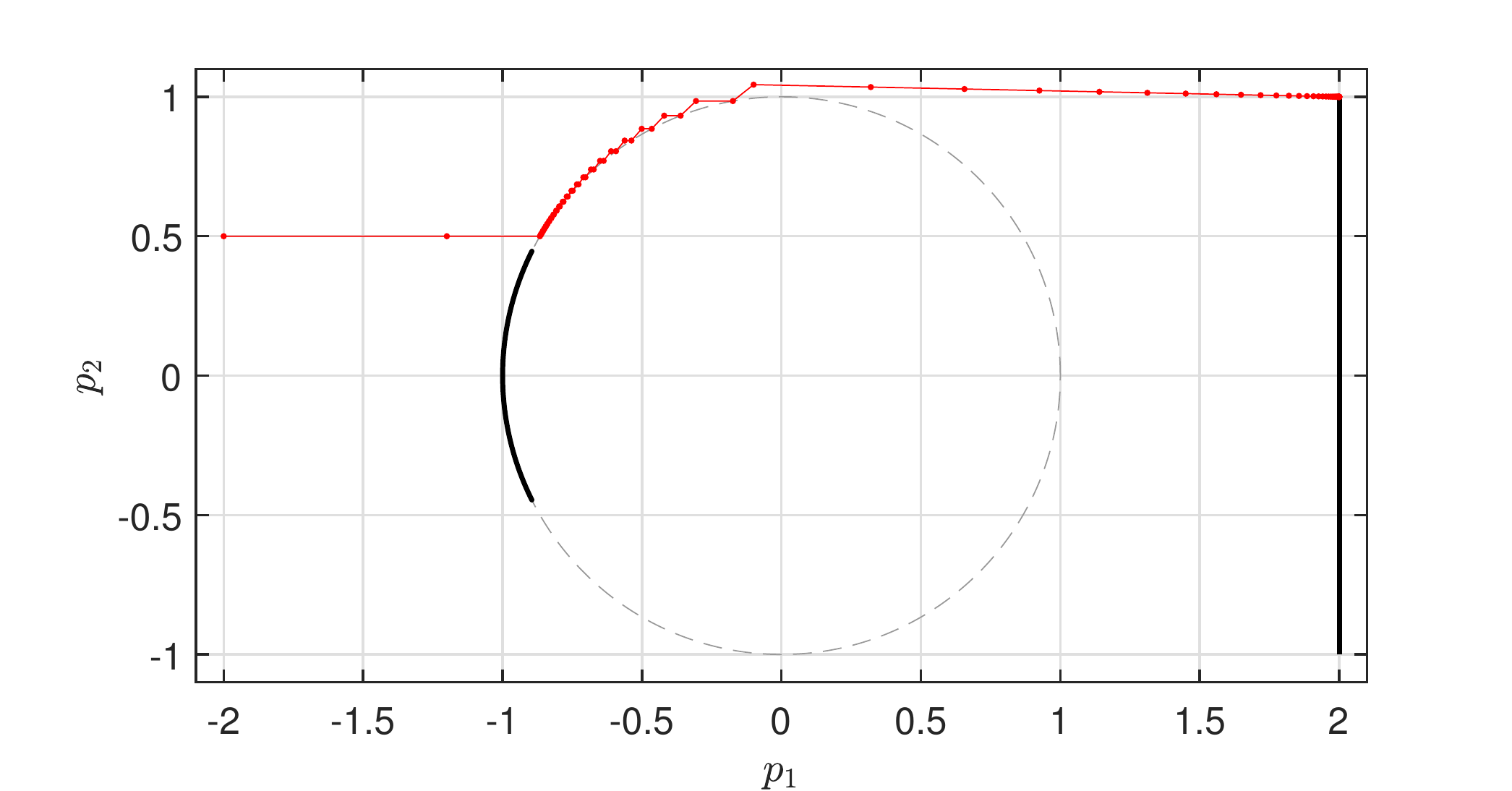}
	\caption{Algorithm \ref{algo:S2} with $\eta = \infty$ starting in $(-2,0.5)^T$ for Example \ref{ex:1}.}
	\label{fig:ex1EtaInf}
\end{figure}
As parameters for Algorithm \ref{algo:S2} we choose
\begin{equation*}
	\beta = \frac{1}{2},~\beta_0 = \frac{1}{10},~\epsilon = 10^{-4}
\end{equation*}
and $\eta \in \{1,\infty\}$. (The parameters for the Armijo step length are set to relatively small values for better visibility of the behavior of the algorithm.) Figure \ref{fig:ex1EtaInf} shows a sequence generated by Algorithm \ref{algo:S2} with $\eta = \infty$ (i.e.~Strategy~1 (Algorithm \ref{algo:S1})). When the sequence hits the boundary, the IC $G$ is activated and in the following subproblem, $G$ is treated as an additional objective function. Minimizing the active ICs results in leaving the boundary.
During the first few steps on the boundary, all possible descent directions are almost orthogonal to the gradients $\nabla F_1$ and $\nabla F_2$. Consequently, the Armijo step lengths are very short. The effect of this is that the sequence hits the boundary many times before the descent directions allow for more significant steps. (Note that this behavior is amplified by our choice of parameters for the Armijo step length.) After the sequence has passed the boundary, the behavior is exactly as in the unconstrained steepest descent method.
\begin{figure}[hb!]
	\centering
	\includegraphics[scale=0.55]{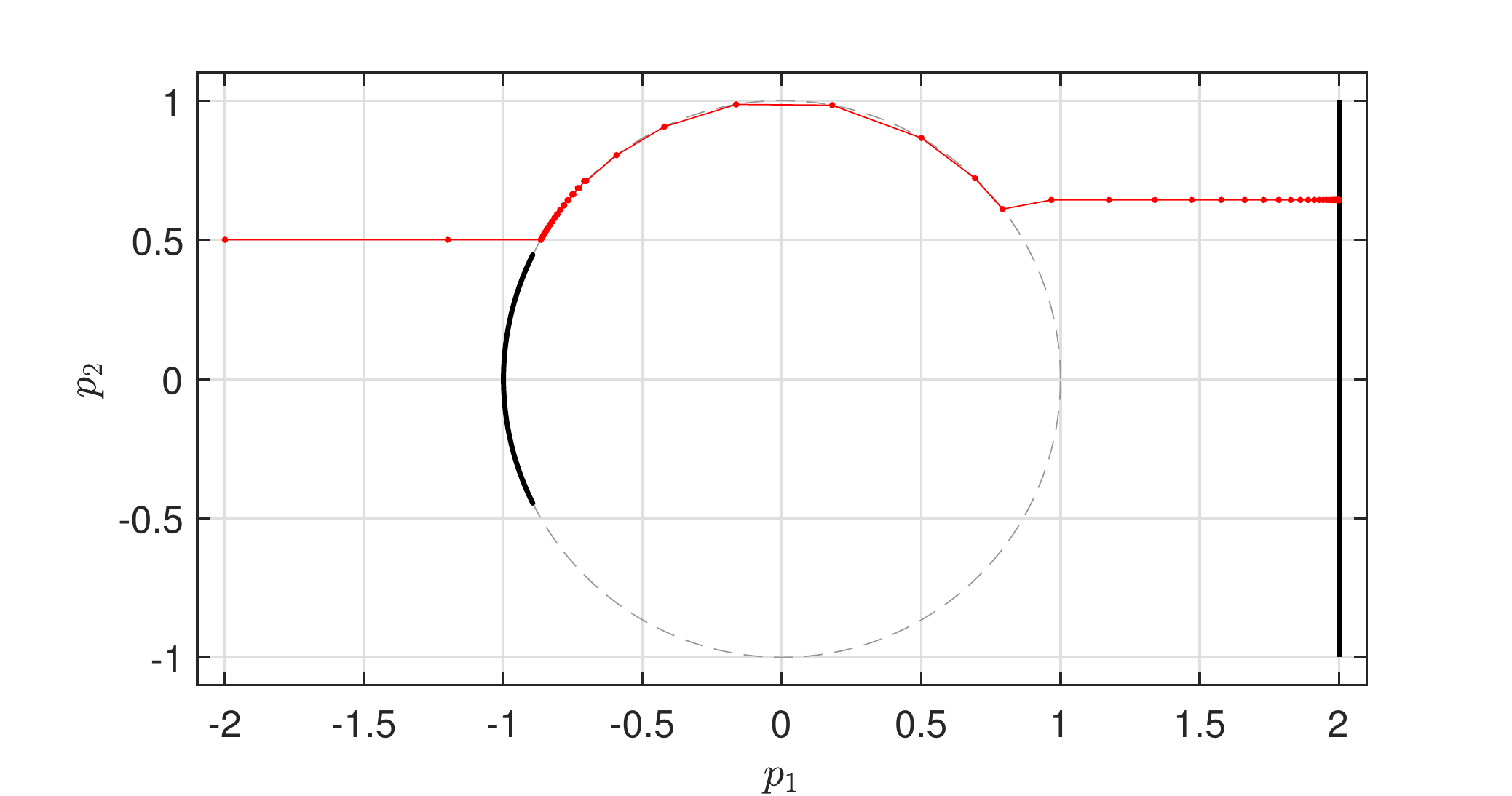}
	\caption{Algorithm \ref{algo:S2} with $\eta = 1$ starting in $(-2,0.5)^T$ for Example \ref{ex:1}.}
	\label{fig:ex1Eta1}
\end{figure}

Figure \ref{fig:ex1Eta1} shows a sequence generated by Algorithm \ref{algo:S2} with $\eta = 1$ which means that active ICs are often treated as ECs instead of objectives. When the sequence first hits the boundary, the IC becomes active and in the following iteration, a solution of \eqref{SP2} is computed. Since the descent direction has to lie in the tangent space of $G^{-1}(\{0\})$ at the current boundary point and there is no descent direction of acceptable quality in this subspace, the optimal value of \eqref{SP2} is less than $-\eta$. Consequently, the solution of \eqref{SP2} is discarded and \eqref{SP1} is solved to compute a descent direction. This means that the sequence tries to leave the boundary and we obtain the same behavior as in Figure \ref{fig:ex1EtaInf}. This occurs as long as the optimal value of \eqref{SP2} on the boundary is less than $-\eta$. If it is greater than $-\eta$ (i.e.~close to zero), the sequence ``sticks'' to the boundary as our method treats the active inequality as an equality. This behavior is observed as long as the optimal value of \eqref{SP2} is larger than $-\eta$. Then the algorithm will again use \eqref{SP1} which causes the sequence to leave the boundary and behave like the unconstrained steepest descent method from then on. 

The above example shows that it makes sense to use Algorithm \ref{algo:S2} with a finite $\eta$ since a lot less steps are required on the boundary (and in general) than with $\eta = \infty$. The fact that the behavior with $\eta = 1$ is similar to $\eta = \infty$ when the boundary is first approached (i.e.~it trying to leave the boundary) indicates that in this example, it might be better to choose a smaller value for $\eta$. In general, it is hard to decide how to choose a good $\eta$ as it heavily depends on the shape of the boundaries given by the ICs.

Note that in the convergence theory of the descent method in Section~\ref{sec:Descent}, we have only focused on convergence and have not discussed efficiency. Therefore, there are a few things to note when trying to implement this method efficiently.
\begin{remark}
	\begin{enumerate}
		\item If Step 5 in Algorithm \ref{algo:S2} is active, two subproblems will be solved in one iteration. A case where this is obviously very inefficient is when the sequence converges to a point $\bar{x}$ that is nowhere near a boundary. This can be avoided by only solving \eqref{SP2} if $I_\epsilon(x_l) \neq \emptyset$ and only solving \eqref{SP1} if $I_\epsilon(x_l) = \emptyset$ or $\alpha_2(x_l) > -\eta$. In order to avoid solving two subproblems in one iteration entirely, one could consider using the optimal value of the subproblem solved in the previous iteration of the algorithm as an indicator to decide which subproblem to solve in the current iteration. But one has to keep in mind that by doing so, it is slightly more difficult to globalize this method since it does no longer exclusively depend on the current point.
		\item For each evaluation of the projection $\pi$ onto the manifold given by the ECs and active ICs, the problem
		\begin{equation*}
			\begin{aligned}
				& && \underset{y \in \R^n}{\text{min}} &&  \| x-y \|^2,  \\
				& && \text{s.t.} &&  H(y) = 0, \\
				& && && G_i(y) = 0 \quad \forall i \in I,
			\end{aligned}
		\end{equation*}
		for some $I \subseteq \{1,...,m_G\}$ needs to be solved. This is an $n$-dimensional optimization problem with a quadratic objective function and nonlinear equality constraints. The projection is performed multiple times in Steps 9 and 11 and once in Step 13 of Algorithm~\ref{algo:S2} and thus has a large impact on the computational effort. In the convergence theory of Algorithm \ref{algo:S1} and \ref{algo:S2}, we have only used the fact that $R_\pi$ (cf.~Definition \ref{def:projection}) is a retraction and not the explicit definition of $\pi$. This means that we can exchange $R_\pi$ by any other retraction and obtain the same convergence results. By choosing a retraction which is faster to evaluate, we have a chance to greatly improve the efficiency of our algorithm. An example for such a retraction (for $m_H = 1$) is the map $\psi: T \mathcal{M} \rightarrow \mathcal{M}$ which maps $(x,tv)$ with $x \in \mathcal{M}$, $t \in \R$ and $v \in T_x \mathcal{M}$ to 
		\begin{equation*}
			x+tv+s \nabla H(x).
		\end{equation*}
		Here,  $s \in \R$ is the smallest root (by absolute value) of
		\begin{equation*}
			\R \rightarrow \R, s \mapsto H(x+tv+s \nabla H(x)).
		\end{equation*}
		In general, such a map is easier to evaluate than $\pi$. Figure \ref{fig:modProjection} shows the behavior of $\psi$ for $H(x) := x_1^2 + x_2^2 - 1$.
		\begin{figure}[t]
			\centering
	  		\includegraphics[scale=0.55]{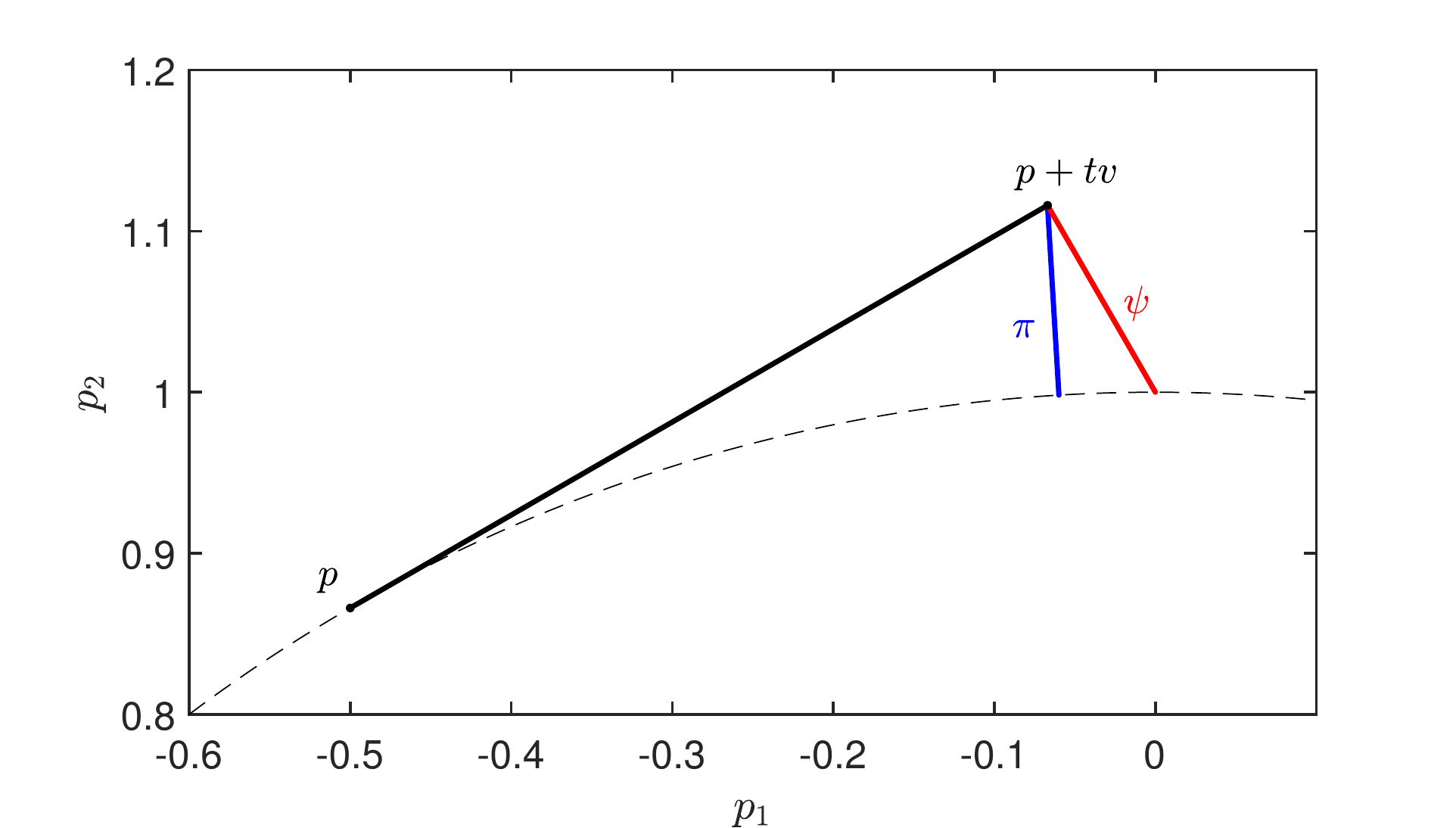}
			\caption{$\psi$ (red) and the projection $\pi$ (blue) on the boundary of the unit circle.}
			\label{fig:modProjection}
		\end{figure}
		For smaller values of $t \| v \|$, these two maps differ even less. If one can show that this map is indeed a retraction (and could possibly be generalized for $m_H > 1$), this would be a good alternative to $\pi$.
	\end{enumerate}
\end{remark}

\section{Conclusion and future work}
\label{sec:Conclusion}
\subsection{Conclusion}
In this article we propose a descent method for equality and inequality constrained MOPs that is based on the steepest descent direction for unconstrained MOPs by Fliege and Svaiter \cite{FS2000}. We begin by incorporating equality constraints using an approach similar to the steepest descent direction on general Riemannian manifolds by Bento, Ferreira and Oliveira \cite{BFO2012} and show Pareto criticality of accumulation points. We then treat inequalities using two different active set strategies. The first one is based on \cite{FS2000} and treats active inequalities as additional objective functions. The second one treats active inequalities as additional equality constraints. Since for the second strategy we require a mechanism to deactivate active inequalities when necessary, we merge both strategies to one algorithm (Algorithm \ref{algo:S2}) and introduce a parameter $\eta$ to control how both strategies interact. We show convergence in the sense that accumulation points of sequences generated by that algorithm satisfy a necessary optimality condition for Pareto optimality. Finally, the typical behavior of our method is shown using an academic example and some of its numerical aspects are discussed, in particular the choice of the parameter $\eta$. In combination with set-oriented methods or evolutionary algorithms, this approach can be used to significantly accelerate the computation of global Pareto sets of constrained MOPs.

\subsection{Future work}
There are several possible future directions which could help to improve both theoretical results as well as the numerical performance.
\begin{itemize}
	\item For the unconstrained steepest descent method in \cite{FS2000}, Fukuda and Drummond present a stronger convergence result than Pareto criticality of accumulation points in \cite{FD2014}. It might be interesting to investigate possible similar extensions for our method.
	\item Being generalizations of the steepest descent method for scalar-valued problems, the unconstrained steepest descent method for MOPs as well as the constrained descent method presented here only use information about the first-order derivatives. In \cite{FDS2009}, a generalization of Newton's method to MOPs is proposed which also uses second-order derivatives. Extending our active set approach to those Newton-like descent methods could help to significantly accelerate the convergence rate.
	\item As explained at the end of Section 4, the computation of the projection onto the manifold can be relatively expensive. The proposed alternative map or similar maps may significantly reduce the computational effort. In order to use such maps without losing the convergence theory, the retraction properties (cf. Definition \ref{def:projection}) need to be shown. 
	\item Since the method presented in this article only calculates single Pareto critical points, globalization strategies and nondominance tests have to be applied to compute the global Pareto set. Since the dynamical system which stems from our descent direction is discontinuous on the boundary of the feasible set, this is not trivial. Although the subdivision algorithm in \cite{DH1997} can be used for this (with minor modifications), it may be possible to develop more efficient techniques that specifically take the discontinuities into account.
	\item Evolutionary algorithms are a popular approach to solve MOPs. It would be interesting to see if our method can be used in a hybridization approach similar to how the unconstrained steepest descent method was used in \cite{LSC2013}.
\end{itemize}

\bibliographystyle{alphaabbr}
\bibliography{literatur}

\end{document}